\newtheorem{lemma}{Lemma}[section]
\newtheorem{thm}{Theorem}[section]
\newtheorem{prop}{Proposition}[section]
\newtheorem{coro}{Corollary}[section]
\newtheorem{remark}{Remark}[section]
\numberwithin{equation}{section}
\newcommand{\pr}{\partial}
\newcommand{\veps}{\varepsilon}
\def\tr{\textmd{tr}}
\def\Lap{\Delta}
\def\grad{\nabla}
\def\Vol{\textmd{vol}}
\def\L{\mathcal{L}}
\def\Div{\textmd{div}}
\def\dint{\displaystyle\int}
\def\R{\mathbb{R}}
\def\vol{\mathrm{vol}\,}
\def\R{\mathbb{R}}
\def\Scal{\mathrm{Scal}}
\newcommand{\be}{\begin{equation}}
\newcommand{\ee}{\end{equation}}
\newcommand{\bee}{\begin{equation*}}
\newcommand{\eee}{\end{equation*}}
\begin{document}

\title[]{Higher dimensional black hole initial data  with prescribed boundary metric}

\author{Armando J. {Cabrera Pacheco}$^1$}
\address[Armando Cabrera]{Department of Mathematics, University of Miami, Coral Gables, FL 33146, USA.}
\email{a.cabrera@math.miami.edu}
\thanks{$^1$Research partially supported by the National Council of Science and Technology of Mexico (CONACyT)}

\author{Pengzi Miao$^2$}
\address[Pengzi Miao]{Department of Mathematics, University of Miami, Coral Gables, FL 33146, USA.}
\email{pengzim@math.miami.edu}
\thanks{$^2$Research partially supported by Simons Foundation Collaboration Grant for Mathematicians \#281105.}

\begin{abstract} 

We obtain higher dimensional analogues of the results of Mantoulidis and Schoen in \cite{schoen-mant}.
More precisely, we show that (i)  any metric $g$ with positive 
scalar
curvature 
on the $3$-sphere $S^3$
can be realized 
as the induced  metric on the outermost apparent horizon 
of  a $4$-dimensional asymptotically flat manifold  with non-negative scalar curvature, 
whose ADM mass can be arranged  to be arbitrarily close to  the optimal value specified by the Riemannian Penrose inequality; 
(ii) any metric $g$ with positive scalar curvature on the $n$-sphere $S^n$, with $ n \ge 4 $,  
such that $(S^n, g)$ isometrically embeds into $ \R^{n+1}$ as a star-shaped hypersurface, 
can be realized as the induced  metric on the outermost apparent horizon 
of  an  $(n+1)$-dimensional asymptotically flat manifold  with non-negative scalar curvature, 
whose ADM mass can be made to be arbitrarily close to  the optimal value.
\end{abstract}
\maketitle

\markboth{Armando Cabrera and Pengzi Miao}{Black hole initial data  with prescribed boundary metric}

\section{Introduction and statement of results}

Recently, Mantoulidis and Schoen  \cite{schoen-mant} gave  an elegant construction of $3$-dimensional asymptotically flat manifolds with non-negative scalar curvature, 
whose ADM mass \cite{adm} is arbitrarily close to the optimal value determined by  the Riemannian Penrose inequality \cite{bray2001proof, huisken}, while  the intrinsic geometry on the  outermost apparent horizon is   ``far away" from being  rotationally symmetric. 
Their result   can be interpreted as a statement demonstrating the instability of the Riemannian Penrose inequality. The  construction in \cite{schoen-mant} is 
geometric and  can be outlined  as a two-step process: 
\begin{enumerate}
\item[1)]  Consider 
the set $\mathscr{M}^+$ consisting of metrics  on the $2$-sphere $S^2$ 
satisfying  $\lambda_1(-\Lap +K )>0$, where $K $ is  the Gaussian curvature. 
 Given any $ g \in \mathscr{M}^+$,
 construct a ``collar extension'' of $g$, which is a metric  of   positive scalar  curvature on the product 
 $[0,1 ] \times S^2$  such that the bottom boundary $ \{ 0 \} \times S^2$, having $g$ as the induced metric, 
is   outer-minimizing  while    the top boundary $\{ 1 \} \times S^2$ is metrically a round sphere 

\item[2)]  Pick any $m>0$ arbitrarily close  to $(A/16\pi)^{1/2}$, where $ A$ is the area of $(S^2, g)$.
Consider a $3$-dimensional spatial Schwarzschild manifold  of mass $m$ (which is scalar flat), 
deform 
it to have positive scalar curvature in a small region  near the horizon, and then glue it to the above collar extension 
by making use of the positivity of the scalar curvature. 
\end{enumerate}
In this way, Mantoulidis and Schoen 
in \cite{schoen-mant} 
constructed  asymptotically flat extensions of $(S^2,g)$ which have
non-negative scalar curvature and, outside a compact set,  
coincide with a spatial Schwarzschild manifold whose mass can be  arranged to be arbitrarily close to the optimal value 
  $(A/16\pi)^{1/2}$.
 
In recent years, there has been a growing interest in black hole geometry in higher dimensions.
Galloway and Schoen in \cite{galloway-schoen} obtained a generalization of Hawking's black hole theorem \cite{hawking} to higher dimensions. 
Their result shows that, in a  spacetime satisfying the dominant energy condition, cross sections  of the  event horizon  are of positive Yamabe type, i.e., they  admit metrics of positive scalar curvature. Bray and Lee in \cite{bray-lee} proved the Riemannian Penrose inequality for dimensions less than eight.  The  inequality asserts that the ADM mass  
$m_{_\textmd{ADM} }$ of   an  $(n+1)$-dimensional ($n<7$),  complete asymptotically flat manifold with non-negative scalar curvature, with boundary consisting  of  closed  outer-minimizing minimal hypersurfaces, satisfies 
\be\label{r-p-ineq}
m_{_\textmd{ADM} } \geq \frac{1}{2}\left(\dfrac{A}{\omega_{n}}\right)^{(n-1)/n},
\ee
where  $A$ is the volume of its boundary and  $\omega_{n}$ denotes  the volume of the standard unit $n$-sphere.

Motivated by the above results, in this work 
we give some higher dimensional analogues of the Mantoulidis-Schoen  theorem in \cite{schoen-mant}.
Given an integer $ n \ge 3$, denote  the $n$-dimensional sphere by $S^n$.
For simplicity, all metrics on $S^n$ below  are assumed to be smooth.
Our main results are

\begin{thm}\label{main}
Let $ g $ be a metric  with positive 
scalar
curvature on $ S^3$.
Denote  the volume of $(S^3, g)$ by $ \vol (g)$. 
Given any $m>0$ such that 
$ \omega_3  \left( 2 m  \right)^\frac32 >  \vol (g)$, 
there exists an asymptotically flat $4$-dimensional manifold $M^4$ with non-negative curvature such that
\begin{enumerate}[(i)]
\item $\pr M^4$ is isometric to $(S^3,g)$ and is minimal,
\item $M^4$, outside a compact set,   is isometric to a spatial Schwarzschild manifold of mass $m$, and 
\item $M^4$ is foliated by mean convex $3$-spheres which  eventually coincide with the rotationally symmetric $3$-spheres  
in the spatial  Schwarzschild manifold. 
\end{enumerate}
\end{thm}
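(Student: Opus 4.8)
The plan is to follow the two-step strategy of Mantoulidis--Schoen, adapted to dimension four.  The key point is that in the $3$-dimensional case one needs a family of collar metrics interpolating between a given $g$ with $\lambda_1(-\Delta+K)>0$ and a round sphere; here the analogous fact is that a metric $g$ on $S^3$ with positive scalar curvature can be joined to the round metric through a path of metrics all having positive scalar curvature.  So the first step is to produce a \emph{path of metrics}: I would invoke the known fact (following from the work on the space of positive scalar curvature metrics, or more elementarily by a Ricci-flow/conformal argument particular to $S^3$) that the space of positive-scalar-curvature metrics on $S^3$ is path-connected, and choose a smooth path $\{g(t)\}_{t\in[0,1]}$ with $g(0)=g$, $g(1)$ a round metric of some radius $r_0$, and $\Scal_{g(t)}>0$ for all $t$.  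After rescaling we may take $g(1)$ to be the unit round metric.

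\smallskip

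The second step is the collar construction.  On $[0,1]\times S^3$ I would put a metric of the form $\gamma = dt^2 + A(t)^2 g(t)$, where $A(t)$ is a positive function chosen to grow fast enough: a direct computation gives the scalar curvature of $\gamma$ in terms of $A$, $A'$, $A''$, the scalar curvature $\Scal_{g(t)}$, and the first and second $t$-derivatives of $g(t)$; the point (exactly as in \cite{schoen-mant}) is that the potentially bad terms are lower order in $A$ while the good term $A^{-2}\Scal_{g(t)}$ is positive and bounded below, so by taking $A$ to be, say, a suitable constant on most of the interval and letting it increase appropriately we can force $\Scal_\gamma>0$ throughout.  One also arranges that near $t=0$ the collar is a product $dt^2+g$ so that $\{0\}\times S^3$ is totally geodesic, hence minimal, and that near $t=1$ the collar is an exact round cylinder $dt^2 + r_1^2 g_{S^3}$ over a round sphere, with $\{1\}\times S^3$ mean convex and with the collar foliated by mean-convex slices.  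Choosing $r_1$ and the length of the cylindrical part freely lets us prescribe the neck exactly.

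\smallskip

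The third step is to glue this collar to a deformed Schwarzschild manifold.  Fix $m>0$ with $\omega_3(2m)^{3/2}>\vol(g)$; the horizon of the $4$-dimensional spatial Schwarzschild manifold of mass $m$ is a round $S^3$ of radius $(2m)^{1/2}$, whose volume $\omega_3(2m)^{3/2}$ therefore exceeds $\vol(g)$, which is what permits the neck of the collar to be matched (after scaling) to a slightly-outside-the-horizon sphere of Schwarzschild.  Following \cite{schoen-mant}, I would first deform the Schwarzschild metric in a small neighbourhood of its horizon so that it acquires strictly positive scalar curvature there while remaining rotationally symmetric, Schwarzschild of mass $m$ outside that neighbourhood, and still foliated by mean-convex spheres; then smoothly interpolate between the cylindrical end of the collar and the deformed Schwarzschild metric, using the strict positivity of scalar curvature on both sides to absorb the (arbitrarily small, if the gluing region is chosen thin) error introduced by the interpolation, invoking a Gauss-lemma/graph normal-form argument to match the metrics to first order and keep the interpolated metric of positive scalar curvature.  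The resulting $M^4$ is asymptotically flat with non-negative scalar curvature, has minimal boundary isometric to $(S^3,g)$, is exactly Schwarzschild of mass $m$ outside a compact set, and is foliated by mean-convex $3$-spheres that eventually become the rotationally symmetric spheres of Schwarzschild.

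\smallskip

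The main obstacle is the first step.  In the Mantoulidis--Schoen argument the relevant two-dimensional fact is the explicit, almost-linear deformation of a metric on $S^2$ with $\lambda_1(-\Delta+K)>0$ to a round metric, which is elementary; in three dimensions one instead needs path-connectedness of the space of positive-scalar-curvature metrics on $S^3$ together with quantitative control on the path (bounds on $\Scal_{g(t)}$ from below and on $g(t)$, $\partial_t g(t)$, $\partial_t^2 g(t)$) so that the collar computation goes through uniformly.  For $S^3$ this is available, but it is the place where the proof is genuinely more delicate than in the original, and it is also the reason the $n\ge 4$ case requires the extra star-shaped-embedding hypothesis rather than mere positivity of scalar curvature.
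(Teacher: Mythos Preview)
Your overall architecture is right and matches the paper: invoke Marques' path-connectedness of $\Scal^+(S^3)$, build a collar, bend Schwarzschild near the horizon, glue.  But your collar ansatz is the wrong one, and the argument you sketch for it does not work.

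You take $\gamma = dt^2 + A(t)^2 g(t)$ and say ``the potentially bad terms are lower order in $A$ while the good term $A^{-2}\Scal_{g(t)}$ is positive and bounded below.''  With $A$ a large constant this is backwards: for $dt^2 + h(t)$ one has $R(\gamma)=R(h)-\tr_h\ddot h-\tfrac14(\tr_h\dot h)^2+\tfrac34|\dot h|_h^2$, and when $h=A^2 g(t)$ the slice curvature $R(h)=A^{-2}\Scal_{g(t)}$ tends to $0$ while the $\dot g,\ddot g$ terms stay of unit size.  The correct move (which is what the paper does, following Mantoulidis--Schoen) is to stretch the \emph{$t$-direction}: take $\gamma_\varepsilon = A^2\,dt^2 + (1+\varepsilon t^2)g(t)$ with $A$ a large constant.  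After $s=At$ this is $ds^2 + h(s/A)$, so the time-derivative terms pick up factors of $A^{-1}$, $A^{-2}$ and can be beaten by the fixed positive lower bound on $\Scal_{g(t)}$.

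There is a second gap concerning the foliation.  You propose a product $dt^2+g$ near $t=0$ and a round cylinder near $t=1$, but a cylinder has minimal (not mean-convex) leaves, and away from those regions the mean curvature of $\{t\}\times S^3$ involves $\tr_{g(t)}\dot g(t)$, which you have no sign control on.  The paper handles this by first reparametrizing and then pulling back by a time-dependent family of diffeomorphisms so that $\tfrac{d}{dt}dV_{g(t)}\equiv 0$; with that normalization the mean curvature of the leaf $\{t\}\times S^n$ in $A^2 dt^2 + (1+\varepsilon t^2)g(t)$ is exactly $\tfrac{n\varepsilon t}{A(1+\varepsilon t^2)}$, giving a minimal boundary at $t=0$ and strictly mean-convex leaves for $t>0$.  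Without this volume-form normalization (or something equivalent), neither the minimality at $t=0$ nor condition (iii) follows from your ansatz.

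Once the collar is set up this way, your description of the bending and gluing steps is accurate and matches the paper's Lemmas~\ref{lma-bending} and \ref{pasting}.
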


\begin{thm} \label{main-2}
Given any $ n \ge 4 $,  let $ g$ be a metric  with positive scalar curvature on $ S^n$. 
Suppose  $(S^n, g)$  isometrically embeds 
into the Euclidean space $ \R^{n+1}$ as a star-shaped hypersurface. 
Denote  the volume of $(S^n, g)$ by $ \vol (g)$. 
Given  any $m>0$ such that $\omega_n (2m)^{n/(n-1)}>\vol(g)$,  there exists an asymptotically flat $(n+1)$-dimensional manifold  
$M^{n+1} $ with non-negative curvature such that
\begin{enumerate}[(i)]
\item $\pr M^{n+1} $ is isometric to $(S^n,g)$ and is minimal,
\item $M^{n+1}$, outside  a compact set,  is isometric to a spatial Schwarzschild manifold  of mass $m$, and
\item $M^{n+1}$ is foliated by mean convex $n$-spheres which  eventually coincide with the rotationally symmetric $n$-spheres  
in the spatial  Schwarzschild manifold.
\end{enumerate}
\end{thm}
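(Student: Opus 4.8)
My plan is to adapt the two-step construction of Mantoulidis--Schoen. Step one is to build a \emph{collar}: a metric $\bar g_C$ of positive scalar curvature on $[0,1]\times S^n$ whose inner boundary $\{0\}\times S^n$ is isometric to $(S^n,g)$ and totally geodesic, and such that in a final short segment $\bar g_C$ is rotationally symmetric, $\bar g_C = \varphi^2\, dt^2 + \rho(t)^2\sigma$ (with $\sigma$ the unit round metric on $S^n$ and $\rho$ increasing), matching a piece of spatial Schwarzschild. Step two is to take the $(n+1)$-dimensional spatial Schwarzschild manifold of mass $m$, deform it in a small neighbourhood of its horizon so that it acquires positive scalar curvature there while remaining exactly Schwarzschild outside a compact set, and glue it to the outer end of the collar along a rotationally symmetric sphere. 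The hypothesis $\omega_n(2m)^{n/(n-1)}>\vol(g)$ is precisely the statement that the Schwarzschild horizon has area larger than $\vol(g)$, which leaves enough room to perform the gluing; as in \cite{schoen-mant} one arranges the mean curvatures along the gluing sphere to be compatible and then mollifies, the strict positivity of the scalar curvature on the collar side absorbing the correction and producing a smooth metric with $R\ge0$. Given this, (i) and (ii) are immediate, and (iii) follows by checking that the level sets of the $[0,1]$-coordinate on the collar, chosen with increasing volume, together with the rotationally symmetric spheres of Schwarzschild, form a single mean convex foliation; taking $m$ close to $\tfrac12(\vol(g)/\omega_n)^{(n-1)/n}$ makes the ADM mass $m$ arbitrarily close to the optimal value.

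Everything hinges on the collar, and this is where the star-shaped hypothesis enters. For $n=3$ one would build it from a path in the space of positive scalar curvature metrics on $S^3$ using Marques' theorem that this space is path-connected; for $n\ge4$ no such result is available, so the isometric star-shaped embedding is used to supply an explicit path. Identify $(S^n,g)$ with the hypersurface $\Sigma=\{u(\theta)\theta:\theta\in S^n\}\subset\R^{n+1}$, so that $g=du\otimes du+u^2\sigma$. I would choose a family of star-shaped hypersurfaces $\Sigma_\tau=\{f_\tau(\theta)\theta\}$, $\tau\in[0,\infty)$, with $f_0=u$ and with $f_\tau$ tending to a (large) constant as $\tau\to\infty$ — for concreteness $f_\tau=\sqrt{u^2+\tau}$, so that $\Sigma_\tau$ is the radial image of $\Sigma$ under $X\mapsto\sqrt{|X|^2+\tau}\,X/|X|$ — and take the induced metrics $g_\tau=df_\tau\otimes df_\tau+f_\tau^2\sigma$ as the slice metrics of the collar, reparametrised in $t$ so as to be constant near $t=0$ (giving the totally geodesic inner boundary) and, in a final short segment, interpolated to an exactly round metric $\rho(t)^2\sigma$ (which is possible since $g_\tau$ is conformally close to round for $\tau$ large). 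Writing the collar metric as $\bar g_C=\varphi^2\, dt^2 + g_{\tau(t)}$ with $\varphi$ large (and matching Schwarzschild near the outer end), the Gauss equation gives $R_{\bar g_C}=R_{g_\tau}-|\II|^2-H^2-2\varphi^{-1}\partial_t H$, and since the last three terms are $O(\varphi^{-2})$ uniformly in $t$, the positivity of $R_{\bar g_C}$ reduces to the positivity of $R_{g_\tau}$ along the whole path.

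The crux, and the step I expect to be the main difficulty, is to verify that $R_{g_\tau}>0$ for every $\tau\ge0$. Since $\Sigma_\tau$ sits in flat $\R^{n+1}$, the Gauss equation identifies $R_{g_\tau}$ with twice the second elementary symmetric function of the principal curvatures of $\Sigma_\tau$, and this is positive at $\tau=0$ (our hypothesis) and for all large $\tau$ (where $\Sigma_\tau$ is $C^2$-close to the round sphere of radius $\sqrt\tau$, for which $R_{g_\tau}\sim n(n-1)/\tau>0$); the issue is the intermediate range. A useful reformulation is that $df\otimes df+f^2\sigma = f^2\big(\sigma + d(\log f)\otimes d(\log f)\big)$ is conformal to a metric that is $C^\infty$-close to $\sigma$ whenever $f$ is large with $|d\log f|$ small, and for such $f$ a direct conformal-Laplacian computation (checking that the relevant inequality holds for $f^{(n-1)/2}$) shows $R_{g_\tau}>0$. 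Thus one wants the path to leave quickly the ``dangerous'' region where $f_\tau$ is comparable to its oscillation and then remain in this ``safe'' large-$f$ region all the way to the round limit; with $f_\tau=\sqrt{u^2+\tau}$ one has $f_\tau\ge\sqrt\tau$ and $|d\log f_\tau|=|u\,du|/(u^2+\tau)\le |du|/(2\sqrt\tau)$, so the safe region is $\tau\gtrsim 1$, and it remains to control the second symmetric function of $\Sigma_\tau$ on the short initial segment $0\le\tau\lesssim 1$ by a direct estimate (or to replace this initial leg of the path by one for which the estimate can be carried out). Making this initial estimate precise is the delicate point.

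Once that is settled, the remaining verifications are routine adaptations of the three-dimensional arguments in \cite{schoen-mant}: smoothness and mean convexity of the full foliation (which one builds in by keeping $\mathrm{tr}_{g_t}\partial_t g_t>0$ for $t>0$ and matching onto the mean convex Schwarzschild spheres), compatibility of the mean curvatures along the gluing sphere together with the mollification removing the corner while preserving $R\ge0$, and asymptotic flatness with ADM mass exactly $m$ because the metric agrees with Schwarzschild outside a compact set.
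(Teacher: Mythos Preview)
Your overall architecture is correct and matches the paper's: a collar extension built from a path of positive scalar curvature metrics on $S^n$, glued to a bent Schwarzschild piece via the rotationally symmetric matching lemma. The gap is exactly where you flag it yourself: you have not shown that your explicit family $f_\tau=\sqrt{u^2+\tau}$ produces hypersurfaces $\Sigma_\tau$ with $R_{g_\tau}>0$ for \emph{all} $\tau$. Your heuristics cover $\tau=0$ (the hypothesis) and $\tau\gg1$ (close to round), but the ``short initial segment $0\le\tau\lesssim1$'' is not a small perturbation of anything --- it is an $O(1)$ deformation of an arbitrary star-shaped hypersurface --- and there is no maximum principle or monotonicity available for $\sigma_2$ under this radial push-out. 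In general this family need not stay in $\Scal^+(S^n)$, so the ``direct estimate'' you defer to cannot be expected to go through without an additional idea.

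The paper resolves precisely this issue by replacing your ad hoc path with the $\sigma_1/\sigma_2$ inverse curvature flow of Gerhardt and Urbas: evolve the initial embedding by $\partial_t X=(\sigma_1/\sigma_2)\,\nu=(n-1)(H/R)\,\nu$. This flow is parabolic on the class of star-shaped hypersurfaces with $\sigma_2>0$, exists for all time, preserves $R>0$ along every $\Sigma_t$ (this is built into the Gerhardt--Urbas theorem), and the rescaled surfaces $e^{-t}\Sigma_t$ converge in $C^\infty$ to a round sphere. Reparametrising $t\mapsto s\in[0,1)$ and using the exponential decay estimates of the flow, one obtains a smooth path $\{h(s)\}_{s\in[0,1]}$ in $\Scal^+(S^n)$ with $h(0)=g$ and $h(1)$ round. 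After that, the construction is exactly the one you describe: normalize to constant volume, apply the collar lemma with large $A$ to get positive scalar curvature on $[0,1]\times S^n$, bend Schwarzschild near the horizon, and glue. So the missing ingredient in your proposal is the use of an inverse curvature flow (rather than a hand-picked radial family) to guarantee positivity of the slice scalar curvature along the entire path.
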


We prove Theorems \ref{main} and \ref{main-2}
by following the two-step process in  Mantoulidis and Schoen's construction mentioned earlier. 
The key ingredient of our proof lies in the first step, in which we 
make use of
results of Marques \cite{Marques}, and of Gerhardt \cite{gerhardt} and Urbas \cite{urbas}, respectively, 
to construct the corresponding ``collar extensions".

This paper is organized as follows. 
In Section \ref{ricci-path},   
 we apply 
a fundamental result of Marques \cite{Marques} 
on deforming three-manifolds of positive scalar curvature
to join an initial metric $g$ of positive scalar curvature on $S^3$  to a  round metric
via a smooth path of metrics of positive scalar  curvature. 
In Section \ref{star-shaped},  we apply one type  of inverse curvature flow in $ \R^{n+1}$, 
studied by Gerhardt \cite{gerhardt} and also by Urbas \cite{urbas},
to connect the metric $g$  on $S^n$ satisfying the condition in Theorem \ref{main-2}  
to a  round metric, via a smooth path of metrics of  positive scalar curvature. 
In Section \ref{S-M},  we carry out Mantoulidis and Schoen's construction in higher dimensions $n \ge 3$.
In Section \ref{sec-proof},  we prove Theorems \ref{main} and \ref{main-2}. 

\vspace{.2cm}

{\bf Acknowledgments}. We want to give our sincere thanks to F. C. Marques for suggesting the proof of Proposition \ref{prop-smoothing}.

\section{Smooth paths in $\Scal^+ (S^3) $} \label{ricci-path}

Let  $\Scal^+ (S^3) $
denote  the set of smooth  metrics  with positive scalar  curvature on $S^3$.
Given  $g\in \Scal^+(S^3)$,
the first step to perform a collar extension of $g$, needed in the proof of Theorem \ref{main},
 is to connect $g$ to a round metric on $S^3$ via a smooth path 
 in $ \Scal^+(S^3)$. 
We will achieve this by first applying the  result of Marques \cite{Marques} to obtain 
  a continuous path,  and then  by mollifying this continuous path to obtain a smooth path. 

We begin with a general path-smoothing procedure, 
 suggested to us by Marques \cite{Marques-p}.
Let $ M$ be an $n$-dimensional, $ n \ge 2$, smooth closed manifold. 
Let $ \mathcal{S}^k (M)$ denote the  space of $C^k$ symmetric $(0,2)$ tensors
on $M$ endowed with the $C^k$ topology. Here $ k \ge 0 $ is either  an integer  or $ k = \infty$. 
Let $ \mathcal{M}^k (M)$ be the open set in $\mathcal{S}^k (M)$ consisting of Riemannian  
metrics.  Given any $g \in \mathcal{M}^k (M)$ with $ k \ge 2$,  let $ R(g)$ denote the scalar 
curvature of $g$.

\begin{lemma} \label{lma-R-1}
Let $ \{ g (t) \}_{ t \in [0,1] }$ be a continuous path in $  \mathcal{M}^k (M)$, $k \ge 2$.
Suppose $ R (g(t)) > 0  $ for each $t$. 
Then there exists a constant $ \epsilon > 0 $ such that, for any  $g \in \mathcal{M}^k (M)$, 
if $ || g - g (t) ||_{C^2} < \epsilon  $ for some $ t \in [0,1]$, then $ R(g)> 0$. 
\end{lemma}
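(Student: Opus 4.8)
The statement is a compactness-plus-openness argument, so the plan is to exploit that the image of the path is a compact subset of the open set $\{ g \in \mathcal{M}^k(M) : R(g) > 0 \}$, together with the continuity of the scalar curvature operator in the $C^2$ topology. First I would observe that for a fixed metric $h$, the scalar curvature $R(g)$ at a point depends on $g$ only through the components of $g$, $g^{-1}$, and the first and second partial derivatives of $g$ in any fixed local chart; hence on a fixed finite atlas the map $g \mapsto R(g)$ is continuous from a $C^2$-neighborhood of $h$ (on which $g$ stays uniformly positive-definite) into $C^0(M)$. In particular, for each fixed $t \in [0,1]$ there is $\epsilon_t > 0$ and a $C^0$-neighborhood bound such that $\| g - g(t)\|_{C^2} < \epsilon_t$ forces $\sup_M |R(g) - R(g(t))| < \tfrac12 \min_M R(g(t))$, which is positive since $R(g(t)) > 0$ on the compact manifold $M$; this already gives $R(g) > 0$ for such $g$.

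Next I would upgrade this pointwise-in-$t$ statement to a uniform one. Since $ t \mapsto g(t)$ is continuous into $\mathcal{M}^k(M) \subset \mathcal{M}^2(M)$, and $t \mapsto \min_M R(g(t))$ is then a continuous positive function on the compact interval $[0,1]$, it is bounded below by some $\rho_0 > 0$. Covering $[0,1]$ by finitely many intervals on which $g(s)$ stays within $C^2$-distance $\tfrac{1}{2}\epsilon_{t_i}$ of $g(t_i)$ and on which the relevant scalar-curvature estimate holds uniformly, and taking $\epsilon$ to be a suitable fraction of the minimum of the finitely many $\epsilon_{t_i}$ (shrunk further so that any $g$ with $\| g - g(t)\|_{C^2} < \epsilon$ is uniformly positive-definite with controlled $C^2$ norm on the whole atlas), one gets a single $\epsilon > 0$ that works for every $t$: if $\| g - g(t)\|_{C^2} < \epsilon$ then $t$ lies in one of the covering intervals, $g$ is $C^2$-close to the corresponding $g(t_i)$, and hence $\sup_M |R(g) - R(g(t_i))| < \tfrac12 \rho_0 \le \tfrac12 \min_M R(g(t_i))$, forcing $R(g) > 0$.

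The only genuinely delicate point is making the continuity of $g \mapsto R(g)$ from the $C^2$ topology into $C^0(M)$ precise and uniform over the path: one must fix once and for all a finite coordinate atlas and a subordinate structure (so that ``$\| \cdot \|_{C^2}$'' has an unambiguous meaning), check that a $C^2$-ball of fixed radius around any $g(t)$ consists of uniformly positive-definite metrics with uniformly bounded $C^2$ coefficients (using compactness of the path again), and then note that $R(g)$ is a universal smooth — indeed rational — expression in $g_{ij}$, $g^{ij}$, $\partial_k g_{ij}$, $\partial_k \partial_l g_{ij}$, hence Lipschitz on such a bounded region, with a Lipschitz constant uniform along the path. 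Everything else is a routine finite-cover argument. I expect this uniformity bookkeeping, rather than any conceptual difficulty, to be the main thing to get right.
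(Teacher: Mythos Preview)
Your proposal is correct and uses essentially the same idea as the paper: compactness of $[0,1]$ together with the $C^2$-to-$C^0$ continuity of the scalar curvature map. The only difference is stylistic --- the paper argues by contradiction via sequential compactness (extracting a subsequence $t_j \to t_*$ and contradicting the local openness at $g(t_*)$), whereas you give a direct finite-subcover argument; your version is more explicit about the uniformity bookkeeping, but the content is the same.
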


\begin{proof}
Suppose the claim is not true. Then for any integer $j > 0 $, there exists a metric  $g_j \in \mathcal{M}^k (M)$  and some $t_j \in [0,1]$
such that 
$  || g_j - g ({t_j}) ||_{C^2} < \frac{1}{j}$
while $ R(g_j) \le 0 $ somewhere on $M$.
Passing to a subsequence, we may assume $\lim_{j \rightarrow \infty} t_j = t_* $ for some point $t_* \in [0,1]$.
Since $ R(g ({t_*}) ) > 0 $, there exists  $ \epsilon_0 > 0 $ such that
 if  $ g   \in \mathcal{M}^k (M)$ and 
$ || g - g ({t_*})  ||_{C^2} < \epsilon_0 $, then $ R(g) > 0 $.
For  large $j$, by the continuity of $\{ g(t) \}$ in $\mathcal{M}^k(M)$,
we now have $ || g_j - g ({t_*}) ||_{C^2} < \epsilon_0 $, hence $ R (g_j) > 0 $ 
which is a contradiction. 
\end{proof}

\vspace{.1cm}

\begin{prop} \label{prop-smoothing} 
Let $ \{ g (t) \}_{ t \in [0,1] }$ be a continuous path in $  \mathcal{M}^k (M)$, $ k \ge 2$.
Suppose    $ R (g (t) ) > 0  $ for each $t$. 
Then there exists a smooth path $\{ h (t)  \}_{ t \in [0,1]} $ in  $  \mathcal{M}^k (M)$ satisfying 
$h (0) = g (0)$, $ h (1) = g (1)$ and $ R (h(t) ) > 0$ for all $t$.
\end{prop}

\begin{proof}
Let $ \epsilon > 0 $ be the constant given by Lemma \ref{lma-R-1}. Since the map
$ t \mapsto g_t \in \mathcal{M}^k (M) $
is continuous on $[0,1]$, there exists $ \delta > 0 $ such that, 
if  $ t$, $t' \in [0,1]$ with $ | t - t' | < \delta$, then $|| g(t) - g (t') ||_{C^2} < \epsilon $. 

Let $ t_0 = 0 < t_1 < \ldots < t_{m-1} < t_m = 1 $  be a sequence of points such that
$  | t_{i-1} - t_{i} | < \delta $, $\forall $ $i = 1, \ldots, m$. On each $ [t_{i-1}, t_i]$, define 
\be \label{eq-linear-path}
 h^{(i)} ({t}) = \frac{ t - t_{i-1}}{ t_i - t_{i-1}} g ({t_i})  +  \frac{ t_i - t}{ t_i - t_{i-1}}  g ({t_{i-1}) } . 
 \ee
Clearly, $ h^{(i)} (t) \in \mathcal{M}^k (M)$ and  $\{ h^{(i)} (t) \}_{t \in [t_{i-1}, t_i]}$ is a smooth path 
in $  \mathcal{M}^k (M) $.
Moreover, 
\be \label{eq-h-and-g-ti}
\begin{split}
 || h^{(i)} (t)  - g (t_{i-1}) ||_{C^2}  = & \ \frac{  t - t_{i-1} }{t_i - t_{i-1}} || g (t_{i} ) - g (t_{i-1} ) ||_{C^2} < \epsilon, \\
 || h^{(i)} (t)  - g (t_i) ||_{C^2}  = & \ \frac{ t_i - t}{t_i - t_{i-1}} || g (t_{i} ) - g ({t_{i-1}}  )||_{C^2} < \epsilon.
\end{split}
\ee
In particular,  $ R (h^{(i)} (t) ) > 0 $ by Lemma \ref{lma-R-1}.
Let $ \{ \hat h (t) \}_{t \in [0,1]}$ be the path of metrics  obtained by 
replacing $ \{ g (t)  \}$ by $ \{ h^{(i)} (t) \} $ on each $[t_{i-1}, t_i]$.  
Then $ \{ \hat h (t) \}_{[0,1]}$ satisfy all the properties desired for $\{h(t)\}_{[0,1]}$ except that 
it is not smooth at the points  $t_1, \ldots, t_{m-1}$. 

To complete the proof, we will mollify $ \{ \hat h (t) \}_{[0,1]}$ near each ``corner"  $t_i$, $ 1\le i \le m-1$. 
 We demonstrate the construction on  $\left( \frac{t_0+ t_1}{2},  \frac{t_1 + t_2}{2} \right) $ as follows. 
Let $ \phi = \phi (s) $ be  a smooth function with compact support in $(-1, 1)$ such that 
$ 0 \le \phi \le 1$, $ \int_{-\infty}^\infty \phi (s) d s =  1 $ and 
\be \label{eq-symmetry}
\phi (s) = \phi (-s). 
\ee
Let $ \sigma > 0 $ be a fixed constant such that 
$ \sigma < \min \left\{ \frac{ t_i - t_{i-1}}{4} \ | \ i =1, \ldots, m \right\} $.  
Let  $ \phi_\sigma (s) = \sigma^{-1} \phi ( \frac{s}{\sigma} )$.
For each   $ t \in \left( \frac{t_0+ t_1}{2},  \frac{t_1 + t_2}{2} \right) $,  define
\be
\begin{split}
h^{(1)}_\sigma  (t) = & \  \int_{-\sigma}^\sigma  \hat h ({ t-s}) \phi_\sigma (s) d s  \\
= & \ \int_{0}^{1}  \hat h  (u)  \phi_\sigma ( t - u ) d u .
\end{split}
\ee
Evidently, $h^{(1)}_\sigma (t)$  lies in $\mathcal{S}^k(M)$ and is smooth in $t$.
 By the convexity of  $ \mathcal{M}^k(M)$ in $\mathcal{S}^k(M)$,  $ h^{(1)}_\sigma (t) $ indeed  lies  in $ \mathcal{M}^k(M)$. 
Moreover, 
\be
h^{(1)}_{ \sigma} (t)  - g(t_1) =  \int_{-\sigma}^\sigma  \left[ \hat h  ({ t-s}) - g(t_1) \right]  \phi_\sigma (s) d s  ,
\ee
which combined  with \eqref{eq-h-and-g-ti} implies 
\be
|| h^{(1)}_{ \sigma} (t)  - g(t_1) ||_{C^2} < \epsilon . 
\ee
Hence, $ R (h^{(i)}_{\sigma}(t) ) > 0 $ by Lemma \ref{lma-R-1}.
Now suppose $ t \in \left( \frac{t_0+ t_1}{2}, \frac{t_0 + 3 t_1}{4} \right) $. 
Then $(t - \sigma, t + \sigma) \subset (t_0, t_1)$. 
Therefore, by \eqref{eq-linear-path} and \eqref{eq-symmetry}, 
 \be
\begin{split}
h^{(1)}_{ \sigma} (t) = & \  \int_{\R^1}  \left[  \frac{ (t-s) - t_{0}}{ t_1 - t_{0}} g_{t_1}  +  \frac{ t_1 - (t-s) }{ t_1 - t_{0}}  g_{t_{0}}  \right]  \phi_\sigma (s) d s  \\
= & \ \int_{\R^1}  \left[  h^{(1)} (t)   +  \frac{ s }{ t_1 - t_{0}}  ( g_{t_{0}} - g_{t_1} )   \right]  \phi_\sigma (s) d s  \\
= & \ h^{(1)}(t) .  
\end{split}
\ee
Similarly, for $ t \in \left( \frac{ t_1 + 3 t_2}{4} , \frac{ t_1 + t_2}{2} \right) $, we have
$
h^{(1)}_{ \sigma} (t)  = h^{(2)}(t). 
$
In other words, the path $ \{  h^{(1)}_{\sigma}(t) \}$ coincides with $ \{ \hat h (t) \}$ 
near $\frac{t_0 +  t_1}{2} $ and $  \frac{t_1 +  t_2}{2}  $.

Applying the above  construction on each  
$ I_i =  \left( \frac{t_{i-1}+ t_i}{2},  \frac{t_{i} + t_{i+1}}{2} \right) $ 
to obtain $ h^{(i)}_\sigma (t)$ and  then   replacing $\hat h (t) $ by $ h^{(i)}_\sigma (t) $ 
on $I_i$,   $ i = 1, \ldots, m-1$, 
we  obtain a smooth path $\{h (t) \}_{t\in[0,1]}$ meeting  all conditions required.
 This completes the proof. 
\end{proof}

Now we state the result of Marques \cite[Corollary 1.1]{Marques}, asserting the path connectedness of the space $\Scal^+(S^3) \subset \mathcal{M}^\infty (S^3)$.

\begin{thm}[\cite{Marques}] \label{thm-Marques}
Given any  $ g \in \Scal^+ (S^3)$, there exists a continuous path $ \{ g(t) \}_{t\in[0,1]}$ in 
$\Scal^+ (S^3)$  connecting $g$ to a round metric on $S^3$.
\end{thm}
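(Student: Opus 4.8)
The proof, due to Marques, proceeds via Perelman's Ricci flow with surgery, and the plan is to follow that route. First I would recall that the Ricci flow $\pr_\tau g = -2\Ric$ preserves positivity of the scalar curvature: along the flow $\pr_\tau R = \Lap R + 2|\Ric|^2 \ge \Lap R + \tfrac23 R^2$, so by the maximum principle $\min_{S^3} R$ is nondecreasing and, starting from $g \in \Scal^+(S^3)$, stays positive and must blow up in finite time. Hence on any time interval free of singularities the smooth Ricci flow is itself a continuous (indeed smooth) path in $\Scal^+(S^3)$.

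Next I would run Perelman's Ricci flow with surgery starting from $g$. Since $S^3$ is simply connected and $g$ has positive scalar curvature, the flow with surgery becomes extinct in finite time; between consecutive surgery times each component evolves smoothly and remains of positive scalar curvature as above. Every component that ever appears is diffeomorphic to $S^3$, and the surgeries take place inside long, almost-round $\epsilon$-necks modeled on $S^2 \times \R$: the operation is to cut along an approximate cross-sectional sphere and glue in standard spherical caps. The key point, and the technical heart of \cite{Marques}, is to interpolate, through metrics of positive scalar curvature on $S^3$, between the metric immediately before a surgery and the one immediately after. This is possible because the modification is localized in a region of very large positive curvature, so an explicit homotopy of the neck-and-cap region keeps $R>0$; components discarded at a surgery (or at extinction) are themselves diffeomorphic to $S^3$ and carry canonical metrics (round spheres, or capped round cylinders) that are joined to a round metric within $\Scal^+(S^3)$ by an explicit path.

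Finally, just before extinction the rescaled flow on the surviving component is $C^\infty$-close to a round $S^3$ (once the scalar curvature is large everywhere, Hamilton's pinching estimates force the curvature operator to be positive, and Hamilton's convergence theorem gives roundness); rescaling is a continuous path in $\Scal^+(S^3)$ preserving roundness, so this segment can be closed up to an honest round metric. Concatenating the finitely many smooth Ricci-flow arcs, the finitely many surgery interpolations, the replacements of discarded components, and the terminal rescaling yields the desired continuous path in $\Scal^+(S^3)$ from $g$ to a round metric. The main obstacle is the surgery-crossing step: producing the positive-scalar-curvature interpolation at each surgery time and keeping track of which components are kept and which discarded; for this we rely on \cite{Marques}.
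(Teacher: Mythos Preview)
The paper does not give a proof of this theorem at all: it is stated as an external result, attributed to Marques \cite{Marques} (specifically Corollary~1.1 there), and then immediately combined with Proposition~\ref{prop-smoothing} to obtain Corollary~\ref{coro-smooth}. So there is no ``paper's own proof'' to compare against.

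Your proposal is a faithful high-level outline of Marques' actual argument (Ricci flow with surgery, preservation of $R>0$, finite extinction, interpolation across surgeries, roundness near extinction), and you correctly flag the surgery-crossing step as the substantive difficulty and defer to \cite{Marques} for it. As a sketch this is accurate; as a self-contained proof it is of course incomplete, since the heart of the matter---constructing the positive-scalar-curvature isotopy across each surgery and handling the canonical-neighborhood/discarded-component bookkeeping---is precisely what you are citing rather than proving. For the purposes of this paper, simply citing \cite{Marques} (as the authors do) is the intended and sufficient step.
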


The following corollary follows directly from Marques' theorem, Theorem \ref{thm-Marques}, and Proposition \ref{prop-smoothing}. 

\begin{coro}\label{coro-smooth}
Given any  $ g \in \Scal^+ (S^3)$,   
there exists a smooth  path $ \{ h(t) \}_{t\in[0,1]}$ in 
$\Scal^+ (S^3)$  connecting $g$ to a round metric on $S^3$.
\end{coro}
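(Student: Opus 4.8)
The plan is simply to concatenate the two results already established in this section. First I would invoke Theorem \ref{thm-Marques} to produce a continuous path $\{g(t)\}_{t\in[0,1]}$ in $\Scal^+(S^3)$, viewed as a subset of $\mathcal{M}^\infty(S^3)$ with the $C^\infty$ topology, satisfying $g(0)=g$ and $g(1)$ a round metric on $S^3$. By the very definition of $\Scal^+(S^3)$, each $g(t)$ is a smooth Riemannian metric with $R(g(t))>0$.

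Next I would apply Proposition \ref{prop-smoothing} with $M=S^3$ and $k=\infty$ to this path. Its hypotheses are met: $\{g(t)\}$ is a continuous path in $\mathcal{M}^\infty(S^3)$ with $R(g(t))>0$ for every $t$. The conclusion furnishes a smooth path $\{h(t)\}_{t\in[0,1]}$ in $\mathcal{M}^\infty(S^3)$ with $h(0)=g(0)=g$, $h(1)=g(1)$ the round metric, and $R(h(t))>0$ for all $t$. Since each $h(t)$ is then a smooth metric on $S^3$ of positive scalar curvature, we have $h(t)\in\Scal^+(S^3)$, so $\{h(t)\}_{t\in[0,1]}$ is the desired smooth path and the corollary follows.

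There is no real obstacle here; the substance lies entirely in Theorem \ref{thm-Marques} (Marques' deformation result) and in Proposition \ref{prop-smoothing} (the mollification argument). The only point deserving a moment's attention is that the path-smoothing in Proposition \ref{prop-smoothing} is performed in the $C^k$ topology with $k=\infty$, so that the resulting family stays smooth as a family of \emph{smooth} metrics — which is exactly how the proposition was stated and proved, combining the convexity of $\mathcal{M}^k(M)$ in $\mathcal{S}^k(M)$ with Lemma \ref{lma-R-1} to keep the scalar curvature positive under small $C^2$ perturbations.
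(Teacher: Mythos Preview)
Your proposal is correct and matches the paper's own argument exactly: the paper simply states that the corollary follows directly from Theorem~\ref{thm-Marques} and Proposition~\ref{prop-smoothing}, which is precisely the concatenation you describe.
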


\section{Smooth paths in $\Scal^+_* (S^n) $} \label{star-shaped} 

In this section, we make preparations for the proof of Theorem \ref{main-2}.
For  $ n \ge 2$, 
let  $ \Scal^+_* (S^n)$ denote   the set of smooth 
metrics $g$  with positive scalar curvature 
on $ S^n$
such that $(S^n , g)$  isometrically embeds 
in $ \R^{n+1}$ as a star-shaped hypersurface.
When $ n =2$, 
by the results in \cite{nirenberg, pogorelov}, 
$\Scal^+_* (S^2) $ agrees with the set of metrics on $S^2$ with  positive Gaussian curvature.

Below,  we focus on $ n \ge 3$. 
Given $ g \in \Scal^+_* (S^n) $, by applying the work  of Gerhardt \cite{gerhardt} and Urbas  \cite{urbas}, 
we 
verify  
that $ g$ can be connected to a round metric on $S^n$  via a smooth path in $ \Scal^+_*(S^n)$. 
We begin  with a  lemma  that ensures  the positivity of the mean curvature of  
the  embedding. 

\begin{lemma} \label{lma-pH}
Let  $ \Sigma $ be a closed hypersurface in $ \R^{n+1}$.  Suppose the induced metric on $ \Sigma$
 has positive scalar curvature.  Then,  the mean curvature  of $ \Sigma$ with respect to the outward normal 
is everywhere positive.
\end{lemma}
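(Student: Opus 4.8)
The plan is to use the Gauss equation to relate the scalar curvature of $\Sigma$ to the principal curvatures, and then a pointwise linear-algebra argument at the point where the sum of principal curvatures is smallest. Let $\kappa_1, \ldots, \kappa_n$ denote the principal curvatures of $\Sigma$ with respect to the outward normal, and let $H = \sum_i \kappa_i$ be the mean curvature (unnormalized). Since $\R^{n+1}$ is flat, the Gauss equation gives that the scalar curvature of the induced metric is $R_\Sigma = H^2 - |\II|^2 = \left(\sum_i \kappa_i\right)^2 - \sum_i \kappa_i^2 = 2 \sum_{i < j} \kappa_i \kappa_j$. By hypothesis $R_\Sigma > 0$ everywhere, so $\sum_{i<j} \kappa_i \kappa_j > 0$ on all of $\Sigma$.

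First I would argue that $\Sigma$ has at least one point of positive mean curvature. Since $\Sigma$ is a closed hypersurface in $\R^{n+1}$, it is contained in some large ball, and at a point of $\Sigma$ touching the smallest enclosing sphere, all principal curvatures with respect to the outward normal are positive (bounded below by the reciprocal of the sphere's radius), so $H > 0$ there. Alternatively one can take the point farthest from the origin. Hence the continuous function $H$ is positive somewhere on the connected set $\Sigma$.

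Next I would run a connectedness argument: I claim $H$ cannot vanish anywhere on $\Sigma$, which together with $H > 0$ somewhere and continuity of $H$ on the connected $\Sigma$ forces $H > 0$ everywhere. Suppose $H(p) = 0$ at some $p \in \Sigma$. Then $\sum_i \kappa_i = 0$ at $p$, so $0 = \left(\sum_i \kappa_i\right)^2 = \sum_i \kappa_i^2 + 2\sum_{i<j}\kappa_i\kappa_j$, which gives $2\sum_{i<j}\kappa_i\kappa_j = -\sum_i \kappa_i^2 \le 0$. This contradicts $R_\Sigma(p) = 2\sum_{i<j}\kappa_i\kappa_j > 0$ (note $R_\Sigma(p) > 0$ also rules out all $\kappa_i = 0$). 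Therefore $H$ never vanishes, and by the intermediate value theorem $H > 0$ on all of $\Sigma$.

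I do not expect a serious obstacle here; the only point requiring a little care is establishing the existence of a point with $H > 0$ to seed the connectedness argument, which is handled cleanly by the enclosing-sphere (or farthest-point) trick. One could alternatively avoid connectedness entirely by a purely pointwise argument: at any point, $R_\Sigma > 0$ gives $\left(\sum_i \kappa_i\right)^2 > \sum_i \kappa_i^2 \ge \frac{1}{n}\left(\sum_i \kappa_i\right)^2$ is not quite enough by itself to fix the sign, so the global input (a point of positive mean curvature plus continuity) genuinely seems needed, which is why I would present it via the connectedness route above.
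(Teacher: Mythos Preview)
Your proof is correct and follows essentially the same approach as the paper: use the Gauss equation $R = H^2 - |\II|^2$ to see that $R > 0$ forces $H \neq 0$ pointwise, then combine this with the existence of a point where $H \ge 0$ (which you justify via the enclosing-sphere/farthest-point argument, and which the paper simply attributes to $\Sigma$ being closed) and continuity to conclude $H > 0$ everywhere. Your writeup is a bit more expanded in terms of principal curvatures and in justifying the seed point, but the underlying logic is identical.
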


\begin{proof}
Let $ R$ and $H$ be the scalar curvature and the mean curvature of $ \Sigma$, respectively. 
By the Gauss equation, we have
$ R = H^2 - | \Pi |^2,  $
where $ \Pi $ is the 2nd fundamental form of $\Sigma$ in $ \R^{n+1}$. Hence, $ R > 0 $ implies $ H^2 > 0$. 
Since $ \Sigma$ is closed, there exists a point on $ \Sigma$ at which $ H \ge 0 $, and hence, we conclude that $ H > 0$ everywhere on $ \Sigma$.
\end{proof}

Given a closed  hypersurface $ \Sigma \subset  \R^{n+1}$, 
let $ \kappa_1, \ldots, \kappa_n $ denote the principal curvatures of $ \Sigma$ with  respect to the outward normal
at each point. 
For $ 1 \le k \le n $,  define the normalized $k$-th mean curvature $\sigma_k$ of $\Sigma$ by
\be
\sigma_k= \frac{1}{{n\choose k}} \sum_{ 1 \le i_1 < i_2 \ldots < i_k \le n }   \kappa_{i_1} \cdots \kappa_{i_k}   .
\ee
Clearly, $ \sigma_1 $ and $ \sigma_2 $ are related to the 
 usual mean curvature $H$ and the scalar curvature $ R$  of $ \Sigma$, respectively, by
\be
H = n \sigma_1 , \ \ 
R = n (n-1) \sigma_2  .  
\ee
We say that a smooth map
$ X: \Sigma \times [0, \infty) \longrightarrow \R^{n+1} $
is  a solution to the $ \sigma_1 / \sigma_2 $ flow if $X$ satisfies 
\be \label{eq-flow-hr}
\frac{\partial X}{\partial  t} 
= \frac{ \sigma_1 } { \sigma_2 }  \nu 
=  \frac{(n-1)H}{R} \nu , 
\ee
where  $\nu$ is the outward unit normal to $ \Sigma_t = X (\Sigma, t)$ and $ H$ and $ R$ are the mean curvature and the 
scalar curvature of $ \Sigma_t $, respectively. 
By definition,  if the $ \{ \Sigma_t \}$ arise from a smooth solution to \eqref{eq-flow-hr},  
$ R $ does not vanish along $ \Sigma_t$, hence must be positive. Consequently,  
by Lemma \ref{lma-pH}, $ H$ must be  positive along $ \Sigma_t$. Hence, the surfaces $ \Sigma_t $ are moving outward. 
The $\sigma_1/\sigma_2 $ flow  is one type of the inverse curvature flows in $ \R^{n+1}$ studied by Gerhardt in \cite{gerhardt} and independently
 by Urbas in \cite{urbas}.
In particular,  the following theorem
is a special case of the general result   proved in \cite{gerhardt} and \cite{urbas}.

\begin{thm} [\cite{gerhardt, urbas}] \label{Gerhardt-Urbas}
Let $ X_0 : S^n \rightarrow \R^{n+1}$ be a smooth embedding such that  
$ \Sigma_0  = X_0 (S^n) $ is star-shaped with respect to a point $P_0  \in \R^{n+1}$.
If $ \Sigma_0$ has positive scalar curvature and positive mean curvature, then 
the $\sigma_1 / \sigma_2 $  flow 
\be \label{eq-HR-g}
 \frac{\partial X}{\partial t}  =  \frac{(n-1) H}{R} \nu,
 \ee
with the initial condition 
$  X (\cdot , 0) =  X_0 (\cdot)    $
has a unique smooth solution $ X: S^n \times [0, \infty) \rightarrow \R^{n+1}$.
In particular, each $ \Sigma_t = X(S^n, t)$ has positive scalar curvature $ R$ and positive mean curvature $H$. 
Moreover,  the rescaled surface $  e^{-t} X(S^n, t) $ converges  to a round sphere centered at $P_0$
  in the $ C^\infty$ topology as $ t \rightarrow \infty$.
\end{thm}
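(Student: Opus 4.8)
The plan is to deduce Theorem~\ref{Gerhardt-Urbas} as a special case of the general long-time existence and convergence theorems for inverse curvature flows established by Gerhardt in \cite{gerhardt} and Urbas in \cite{urbas}. The main point is to check that the curvature function $F = \sigma_2 / \sigma_1 = R / ((n-1)H)$ satisfies the structural hypotheses required by those works, when restricted to the relevant cone of admissible second fundamental forms, and that our initial hypersurface $\Sigma_0$ lies in that admissible class.

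First I would identify the admissible cone. Both \cite{gerhardt} and \cite{urbas} treat flows $\partial_t X = \mathcal{F}^{-1} \nu$ (with $\mathcal{F}$ a suitable homogeneous degree-one curvature function) for star-shaped initial hypersurfaces whose principal curvatures lie in an open symmetric convex cone $\Gamma \subset \R^n$ on which $\mathcal{F}$ is positive, monotone increasing in each argument, and concave, with $\mathcal{F} \to 0$ on $\partial \Gamma$. In our case $\mathcal{F} = \sigma_2 / \sigma_1$ and the natural cone is $\Gamma_2 = \{ \kappa \in \R^n : \sigma_1(\kappa) > 0, \ \sigma_2(\kappa) > 0 \}$, the Garding cone. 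It is standard (Garding; see also the references in \cite{gerhardt,urbas}) that $\sigma_2/\sigma_1$ is concave and strictly monotone on $\Gamma_2$, is homogeneous of degree one, and vanishes precisely where $\sigma_2 = 0$, i.e.\ on the boundary component $\partial \Gamma_2$ reached from inside. I would then observe that a closed hypersurface $\Sigma$ in $\R^{n+1}$ with $R > 0$ and $H > 0$ has principal curvatures in $\Gamma_2$: by Lemma~\ref{lma-pH} and the relations $H = n\sigma_1$, $R = n(n-1)\sigma_2$, both $\sigma_1 > 0$ and $\sigma_2 > 0$, so $\kappa \in \Gamma_2$ everywhere. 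Hence $\Sigma_0$ meets the hypotheses of the cited theorems, which produce a unique smooth solution $X : S^n \times [0,\infty) \to \R^{n+1}$ with each $\Sigma_t$ remaining star-shaped and admissible — in particular $R > 0$ along every $\Sigma_t$, and then $H > 0$ along every $\Sigma_t$ again by Lemma~\ref{lma-pH}. The asymptotic roundness of $e^{-t} X(S^n,t)$ in $C^\infty$, with limiting center $P_0$, is exactly the convergence conclusion of \cite{gerhardt} (and \cite{urbas}) for this class of flows, since the expansion rate of a sphere of radius $\rho$ under $\partial_t X = (n-1)H/R\,\nu$ is $\dot\rho = \rho/(n-1) \cdot \ldots$; more precisely the flow speed on a round sphere is homogeneous of degree $-1$ in $\rho$, which is the regime in which the rescaled flow converges to a sphere.

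The steps, in order, would be: (1) recall the precise statement of the Gerhardt/Urbas theorem for inverse curvature flows of star-shaped hypersurfaces by functions of the principal curvatures, including the list of admissibility conditions on $\mathcal{F}$ and $\Gamma$; (2) verify that $\mathcal{F} = \sigma_2/\sigma_1$ with $\Gamma = \Gamma_2$ satisfies all of these (positivity, degree-one homogeneity, monotonicity, concavity, and vanishing on the inner boundary of the cone); (3) use Lemma~\ref{lma-pH} together with $R = n(n-1)\sigma_2$, $H = n\sigma_1$ to show $\Sigma_0$ is $\Gamma_2$-admissible, so the cited theorem applies and yields the unique smooth solution; (4) read off from that theorem that each $\Sigma_t$ stays star-shaped with $\kappa \in \Gamma_2$, hence $R>0$, hence (Lemma~\ref{lma-pH} again) $H > 0$; and (5) quote the convergence part of \cite{gerhardt,urbas} for the $C^\infty$ convergence of $e^{-t}X(S^n,t)$ to a round sphere centered at $P_0$.

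The main obstacle is a bookkeeping one rather than a genuine difficulty: the flows in \cite{gerhardt} and \cite{urbas} are usually written as $\partial_t X = \mathcal{F}^{-1}\nu$ for an \emph{increasing} curvature function $\mathcal{F}$, whereas we have written the speed directly as $(n-1)H/R = \sigma_1/\sigma_2 = (\sigma_2/\sigma_1)^{-1}$, so I must be careful that $\mathcal{F} = \sigma_2/\sigma_1$ — and not its reciprocal — is the object required to be increasing and concave on $\Gamma_2$; this is indeed the case, but it is the place where sign conventions could be mishandled. A secondary point to handle with care is that the cited convergence results are typically stated for flows with homogeneity degree $-1$ in $X$ (equivalently degree $-1$ in the principal curvatures), which is exactly the homogeneity of $\sigma_1/\sigma_2$, so the rescaling by $e^{-t}$ is the correct normalization; one should confirm that the version of the theorem being quoted includes both the long-time existence and the $C^\infty$ asymptotic roundness, as both \cite{gerhardt} and \cite{urbas} do for this class. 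No new analysis is needed beyond citing and specializing these results.
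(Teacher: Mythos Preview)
Your proposal is correct and in fact more detailed than what the paper itself does: the paper does not give a proof of this theorem at all, but simply records it as ``a special case of the general result proved in \cite{gerhardt} and \cite{urbas}'' and then quotes it. Your outline of how to verify that $\mathcal{F}=\sigma_2/\sigma_1$ on the cone $\Gamma_2$ meets the structural hypotheses of those references is exactly the check one would carry out to justify that citation, so your approach and the paper's coincide.
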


In what follows, we  
argue  that  
the proof of Theorem \ref{Gerhardt-Urbas} 
in   \cite{gerhardt, urbas}
indeed provides  a smooth path of  metrics, with positive scalar  curvature,    connecting  any  $ g \in \Scal^+_* (S^n) $ to a round metric on $S^n$. 
We follow the notations used in \cite{urbas}. 
Identifying  $ S^n $ with   the unit sphere $ \{ x \in \R^{n+1} \ | \ | x | = 1 \}  $ and 
assuming $ X_0 : (S^n , g) \rightarrow \R^{n+1}$ is an isometric embedding such that 
 $X_0 (S^n)$ is star-shaped with respect to the origin (modulo  a diffeomorphism on $S^n$), we can  write $X_0$ as 
\be 
X_0 (x) = \rho_0 (x) x , \ x \in S^n . 
\ee
Here $\rho_0 : S^n \rightarrow \R^+ $ is a smooth positive function on $S^n$, referred to as the {\em radial function} 
representing $ \Sigma_0 $ in \cite{urbas}.
For each $ t > 0$, 
the surface $ \Sigma_t  $ in Theorem \ref{Gerhardt-Urbas}  is  then  given by the graph of a  function $ \rho (\cdot, t)$ over $ S^n$, where
\be
 \rho (\cdot, \cdot) : S^n \times [0, \infty) \longrightarrow \R^+ 
 \ee
is a smooth function solving  the  parabolic equation (2.8) in \cite{urbas},  i.e.,
\be
\frac{\partial \rho }{\partial t }  = \frac{ \left( \rho^2 + | \nabla \rho |^2 \right)^\frac12}{ \rho F( a_{ij} ) }, 
\ee
with the initial condition $ \rho (\cdot, 0) = \rho_0 $.
Here, $ \nabla $  denotes the gradient on $S^n$ with respect to the standard metric  and 
$F (a_{ij})$ is given by equation (2.9) in \cite{urbas}, which is simply the expression of 
 $ {\sigma_2} / {\sigma_1} $  in terms of $ \rho(\cdot, t)$.
For each $t$, one can rescale    $\rho $ to define  $ \tilde \rho (\cdot, t)  = e^{-t} \rho (\cdot, t)$.  
Then  $\tilde \rho (\cdot, t) $   satisfies 
\be \label{eq-pde-trho}
\frac{\partial \tilde \rho }{\partial t }  = \frac{ \left( {\tilde \rho}^2 + | \nabla \tilde  \rho |^2 \right)^\frac12}{ \tilde \rho F( \tilde{a}_{ij} ) }
- \tilde \rho ,
\ee
where $ F( \tilde a_{ij} )$ is the expression of $\sigma_2 / \sigma_1$ associated to  the graph of $ \tilde \rho (\cdot, t)$ over $S^n$ 
(see equation (3.28) in \cite{urbas}). 
The following 
estimates on $ \tilde \rho (\cdot, t)  $ and the   convergence of $\tilde \rho (\cdot, t) $ as $t \rightarrow \infty$  are given by  (3.38), (3.39) and (3.40) in \cite{urbas}: 
\begin{enumerate}
\item[a)] There exist positive constants $C$ and $\gamma$ such that 
\be
\max_{ S^n} | \tilde \rho ( \cdot, t) - \rho^* | \le C e^{ - \gamma t} .
\ee
Here, $ \rho^* > 0 $ is some constant.

\item[b)] For any positive integer $ k$ and any constant $ \tilde{\gamma} \in (0, \gamma)$, 
there exists a positive constant $ C_k = C_k (\gamma, \tilde \gamma ) $ such that
\be
\int_{S^n} | \nabla^k \tilde \rho (\cdot, t) |^2 \le C_{k} e^{- \tilde \gamma t} . 
\ee

\item[c)] Given any two  integers $l \ge 0$ and $ k > l + \frac{n}{2}$, 
there exists a positive  $ C = C(k, l)$ such that 
\be
|| \tilde \rho (\cdot, t) - \rho^* ||_{C^l (S^n) } \le C 
\left[   \int_{S^n} | \nabla^k \tilde \rho (\cdot, t) |^2
+ \int_{S^n}| \tilde \rho ( \cdot, t) - \rho^* |^2 \right]^\frac12 . 
\ee

\end{enumerate}
It follows directly from a), b) and c) that   there exists a constant $ \delta > 0 $ (say $ \delta = \frac12 \gamma$)
such that, for any integer $ l \ge 0 $,
\be \label{eq-est-spatial}
|| \tilde \rho (\cdot, t) - \rho^* ||_{C^l (S^n) } \le C  e^{-\delta t}. 
\ee
This, combined with the PDE  \eqref{eq-pde-trho},  in turn   implies, for any integer  $ k \ge 1$, 
\be \label{test-starshape}
\left\| \frac{\partial^k \tilde \rho }{\partial t^k}  \right\|_{C^l (S^n)}  \le   C   e^{-\delta t},
\ee
for some constants $C$.

Now, we can define  a  path of metrics in $ \Scal^+_* (S^n)$
 connecting $ g$ to a round metric $g^*$ that corresponds to  a round sphere in $ \R^{n+1}$ 
of radius $ \rho^*$ as follows. 
Define $ \Phi_t : S^n \rightarrow \R^{n+1} $  by
$ \Phi_t (x) = \tilde \rho(x, t) x $
and let 
\be
 g(t)  = \Phi_t^* ( g_{_E}), 
 \ee
where $ g_{_E}$ is the Euclidean metric on $ \R^{n+1}$. 
Theorem \ref{Gerhardt-Urbas} guarantees that  $ g(t)$ has positive scalar curvature.
Moreover,  given any integers $l$ and $k$,  
it follows  from \eqref{eq-est-spatial} and \eqref{test-starshape} that 
\be \label{metric-estimate}
\| g(t) - g^* \|_{C^l (S^n) } \le   C   e^{-\delta t} 
\ee
and
\be \label{derivative-estimate}
\left\| \frac{\partial^k}{\partial t^k} g(t) \right\|_{C^l (S^n)}  \le   C   e^{-\delta t} .
\ee

We make a change of variable $ t = t(s)$  to view  the metrics $ \{ g(t) \}$ 
as a new family of metrics $\{ h(s ) \}$ defined on the  finite interval $[0,1]$.
Specifically, let 
\be \label{path-finite-int}
t(s)=\dfrac{1}{(s-1)^2}-1,\mbox{ (then $s=1-\dfrac{1}{\sqrt{1+t}}$)}
\ee
and define
\be \label{eq-hs-s}
h(s)=\left\{ \begin{array}{cl} g(t(s)) & \textmd{when $s\in[0,1)$}   \\ g^* & \textmd{when $s=1$}\end{array} \right.,
\ee
which is continuous by \eqref{metric-estimate}. Using the exponential decay estimate of the derivatives in \eqref{derivative-estimate}, one concludes  that the metric defined by
\be  
H = d s^2 + h(s),
\ee
is smooth on $ I \times S^n$ and it satisfies that $h(0) = g$,  $h(s)$ is a metric of positive scalar curvature on $S^n$  for all
$s \in [0, 1]$,  and $ h(1)$ is a round metric.

\section{Mantoulidis-Schoen  construction in higher dimensions}\label{S-M}

In this section, we recall the construction of Mantoulidis and Schoen from Section 1 and 2 in \cite{schoen-mant}.
Though stated for dimension $n=2$, many of their arguments apply in a straightforward manner to  higher dimensions
 $ n \ge 3$.  
For readers' convenience, we include the proof of all lemmas stated  below.

\subsection{Deformations on $I \times S^n$}

\begin{lemma}\label{nice-path}
Suppose $ \{ h (t) \}_{0 \le t \le 1} $ is a family of metrics of  positive scalar curvature  on $S^n$   such that 
\begin{itemize}
\item $ h(1) $ is a round metric.
\item $H=dt^2+h(t)$ is a smooth metric on $I\times S^{n}$, where $I = [0, 1]$.
\end{itemize}
Also, suppose that $ \vol (h(t))$, 
 the volume of $(S^n, h(t) )$, is a constant independent of $t$.  
Then, there exists a smooth metric $G = d t^2 + g(t)  $ on $I\times S^{n}$ satisfying
\begin{enumerate}[(i)]
\item $g(0) $ is isometric to $h(0)$ on $S^n$,
\item $g(t)$ has positive scalar curvature  $\forall \ t \in I$, 
\item $g(1)$ is round, $ g(t) = g(1)$ $\forall $  $t\in [1/2,1]$,  and 
\item $\frac{d}{dt}dV_{g(t)}=0$ for all $t\in [0,1]$. Here, $ d V_{g(t)}$ is the volume form of $g(t)$ on $S^n$. 
\end{enumerate} 
\end{lemma}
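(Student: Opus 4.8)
The plan is to reduce the lemma to two standard moves: first a reparametrization in the $t$-variable so that the given path reaches the round metric already at $t=1/2$ and is constant on $[1/2,1]$, and then a Moser-type normalization of the volume forms along the path. No curvature estimates are involved, since both moves act either by reparametrization or by pullback along a diffeomorphism, and hence preserve positivity of the scalar curvature automatically.

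\textbf{Step 1: freezing the path.} Fix a smooth non-decreasing surjection $\gamma\colon[0,1]\to[0,1]$ with $\gamma(0)=0$ and $\gamma\equiv 1$ on $[1/2,1]$, and set $\tilde h(t):=h(\gamma(t))$. Since $h$ is smooth on $I\times S^n$ and $\gamma$ is smooth, $dt^2+\tilde h(t)$ is again a smooth metric on $I\times S^n$; each $\tilde h(t)$ still has positive scalar curvature; $\tilde h(0)=h(0)$; $\tilde h\equiv h(1)$ on $[1/2,1]$, with $h(1)$ round; and $V_0:=\vol(\tilde h(t))=\vol(h(\gamma(t)))$ is independent of $t$ by hypothesis.

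\textbf{Step 2: normalizing the volume forms.} Consider the smooth family of volume forms $\omega_t:=dV_{\tilde h(t)}$ on the closed oriented manifold $S^n$; they all induce the same orientation and $\int_{S^n}\omega_t=V_0$ is constant in $t$, which is exactly where the hypothesis on $\vol(h(t))$ is used. Hence $\partial_t\omega_t$ is exact, and I would choose an $(n-1)$-form $\eta_t$ with $d\eta_t=\partial_t\omega_t$ depending smoothly and linearly on $\partial_t\omega_t$ (via a fixed right inverse of $d$), let $X_t$ be the time-dependent vector field determined by $\iota_{X_t}\omega_t=-\eta_t$, and integrate its flow on the compact manifold $S^n$ to get a smooth family of diffeomorphisms $\psi_t$ with $\psi_0=\mathrm{id}$. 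Cartan's formula then gives $\partial_t(\psi_t^*\omega_t)=\psi_t^*(d\iota_{X_t}\omega_t+\partial_t\omega_t)=\psi_t^*(-d\eta_t+\partial_t\omega_t)=0$, so $\psi_t^*\omega_t=\omega_0=dV_{h(0)}$ for all $t$. On $[1/2,1]$ we have $\partial_t\omega_t=0$, hence $\eta_t=0$, hence $X_t=0$, so $\psi_t\equiv\psi_{1/2}$ there.

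\textbf{Step 3: conclusion.} Set $g(t):=\psi_t^*\tilde h(t)$ and $G:=dt^2+g(t)$ on $I\times S^n$; since $(t,x)\mapsto\psi_t(x)$ is smooth (smooth dependence of ODE flows on time), $G$ is a smooth metric on $I\times S^n$. Then $g(0)=\psi_0^*\tilde h(0)=h(0)$ gives (i); each $g(t)$ is isometric to $\tilde h(t)$, hence has positive scalar curvature, giving (ii); on $[1/2,1]$, $g(t)=\psi_{1/2}^*h(1)$ is $t$-independent and, being isometric to the round metric $h(1)$, is round, giving (iii); and $dV_{g(t)}=\psi_t^*(dV_{\tilde h(t)})=\psi_t^*\omega_t=dV_{h(0)}$ is $t$-independent, so $\frac{d}{dt}\,dV_{g(t)}=0$, giving (iv). The only content-bearing step is Step 2: the point to get right is that the volume forms along the path can be simultaneously normalized precisely because their total volume is constant, and that the normalizing diffeomorphisms must be chosen stationary wherever the path is already constant, so that (iii) and the smoothness of $G$ are not disturbed.
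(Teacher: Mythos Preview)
Your proof is correct and follows essentially the same two-step strategy as the paper: first reparametrize the path so it is constant on $[1/2,1]$, then pull back by a flow of diffeomorphisms chosen so that the volume forms become $t$-independent. The only cosmetic difference is in how the generating vector field is produced: the paper writes the equation $\mathrm{div}_{h(t)}X_t=-\tfrac12\,\tr_h\dot h$ and solves it by taking $X_t=\nabla^{h(t)}\psi$ with $\Delta_{h(t)}\psi=-\tfrac12\,\tr_h\dot h$, whereas you phrase the same condition as $d(\iota_{X_t}\omega_t)=-\partial_t\omega_t$ and solve it via a fixed linear right inverse of $d$; both are standard implementations of the Moser trick and yield the same conclusions, including $X_t\equiv0$ on $[1/2,1]$.
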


\begin{proof}
Choose  a smooth monotone function $\zeta$ on $[0,1]$ such that $\zeta(0)=0$ and $\zeta(t)=1$, $t \in [1/2,1]$.
Consider  $ h (\zeta (t) )$, $ t \in [0,1]$.
This new path $\{ h (\zeta (t) )  \}_{0\le t \le 1} $  satisfies the first three conditions and 
is volume preserving. 
Relabel $h(\zeta(t))$ as $h(t)$. To achieve  condition (iv), we make use of diffeomorphisms on $S^n$.
Let $\{\phi_t\}_{0 \le t \le 1}$  be a $1$-parameter family of diffeomorphisms  on $S^{n}$ 
generated by a $t$-dependent, smooth vector field $X_t = X( \cdot, t)$   to be chosen later. 
Define $g(t)=\phi_t^*(h(t))$.  Let  $ \dot g = \frac{d}{dt } g$, then  
\be 
\frac{d}{dt}dV_{g(t)}=\dfrac{1}{2}\tr_g\dot{g}\, dV_{g(t)} 
\ee
and
\be
\dot{g}=\dfrac{d}{dt}\phi_t^* (h(t) ) =\phi_t^*\left(\dfrac{d}{dt}h (t) \right)+\phi_t^*(\L_{X_t}  h(t)). 
\ee
Hence, 
\be \label{vgt}
\begin{split}
\tr_g\dot{g} = & \ \tr_{\phi_t^*(h (t) )}\left( \phi_t^*\left(\dfrac{d}{dt}h\right)+\phi_t^*(\L_{X_t}  h(t))\right) \\
= & \ \phi_t^*\left( \tr_h\dot{h}+\tr_h\L_{X_t}h(t)   \right) \\
= & \ \phi_t^*\left( \tr_h \dot{h}+2\Div_h X_t\right).
\end{split}
\ee
Now 
let $\psi(t,x)$ be  a smooth function on $I \times S^n$  obtained by 
solving  the elliptic equation on $S^n$,
\be\label{elliptic}
\Lap_h \psi(t,\cdot)=-\dfrac{1}{2}\tr_h\dot{h}, 
\ee
 for each $t $. \eqref{elliptic}  is solvable since $\dint_{S^n}\dfrac{1}{2}\tr_h\dot{h}\,dV_{h(t)}=\dfrac{d}{dt}\dint_{S^n}\,dV_{h(t)}=0$. Furthermore, the solution $ \psi (t , \cdot)$ depends smoothly on $t$.
Let $ X_t =\grad^{h(t)}\psi  $, where $ \nabla^{h(t)} $ is the gradient on  $(S^n, h(t))$. 
Clearly, $ \tr_g \dot g = 0 $ by \eqref{vgt} and \eqref{elliptic}. Condition (iv) is thus  satisfied. 
\end{proof}

Next,  given a fixed choice of $\{ h(t) \}$, 
we continue to  denote the path provided  in Lemma \ref{nice-path} by $\{ g(t) \}$.
The following lemma 
deforms the metric $ d t^2 + g(t)$ on $I\times S^{n}$ to a metric of  positive scalar curvature.

\begin{lemma}\label{collar}
There exists 
$A_0>0$ such that for all 
$\veps  \in [0,1]$ and $A\geq A_0$, 
the metric on $[0,1]\times S^{n}$ given by
\be \label{collar-metric}
\gamma_{\veps}=A^2dt^2+(1+\veps t^2)g(t),
\ee
has positive scalar curvature on $I \times S^n$, 
$\{0\}\times S^{n}$ is minimal, and the spheres $\{t\}\times S^{n}$ for $t\in (0,1]$
 are mean convex with respect to the normal direction $\pr_t$.
\end{lemma}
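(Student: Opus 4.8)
The plan is to compute the scalar curvature of the warped-product-type metric $\gamma_\veps = A^2 dt^2 + (1+\veps t^2) g(t)$ on $I\times S^n$ directly, track the dependence on the large parameter $A$, and observe that the dominant term as $A\to\infty$ is $A^{-2}$ times the (positive) scalar curvature of $g(t)$ on the slice, so positivity follows for $A$ large. Likewise, the second fundamental form of the slice $\{t\}\times S^n$ is $O(A^{-1})$ with a sign governed by $\partial_t\big[(1+\veps t^2) g(t)\big]$, and condition (iv) from Lemma \ref{nice-path} ($\frac{d}{dt} dV_{g(t)}=0$) is exactly what forces the mean curvature to have the right sign.

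First I would set up notation: write $\gamma_\veps = A^2 dt^2 + \omega(t)^2 g(t)$ with $\omega(t)^2 = 1+\veps t^2$, and let $\bar g(t) = \omega(t)^2 g(t)$ denote the induced metric on the slice $S^n_t := \{t\}\times S^n$. The unit normal to $S^n_t$ is $\nu = A^{-1}\partial_t$, so the second fundamental form is $\mathrm{II} = \frac{1}{2A}\partial_t \bar g(t)$ and the mean curvature (trace with respect to $\bar g$) is $H = \frac{1}{2A}\tr_{\bar g}(\partial_t \bar g) = \frac{1}{A}\frac{d}{dt}\log\big(dV_{\bar g(t)}\big)$. Since $dV_{\bar g(t)} = \omega(t)^n\, dV_{g(t)}$ and $\frac{d}{dt}dV_{g(t)} = 0$ by Lemma \ref{nice-path}(iv), we get $H = \frac{n}{A}\frac{\omega'(t)}{\omega(t)} = \frac{n\veps t}{A(1+\veps t^2)}$. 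This is $0$ at $t=0$ (so $\{0\}\times S^n$ is minimal) and strictly positive for $t\in(0,1]$ when $\veps>0$; when $\veps=0$ every slice is minimal, which still satisfies ``mean convex'' in the weak (non-strict) sense, but to get strict mean convexity one keeps $\veps\in(0,1]$ — I would note that the case $\veps=0$ is allowed precisely because minimal slices are trivially outer-minimizing, matching how the statement is used later, or simply restrict attention to $\veps>0$ for the mean-convexity clause.

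Next I would compute the scalar curvature. Use the standard formula for a metric of the form $A^2 dt^2 + \bar g(t)$ on $I\times S^n$:
\be
R(\gamma_\veps) = A^{-2}\Big[ \tfrac{1}{4}(\tr_{\bar g}\partial_t\bar g)^2 - \tfrac{3}{4}|\partial_t\bar g|^2_{\bar g} - \tr_{\bar g}(\partial_t^2 \bar g) + \ldots \Big] + R(\bar g),
\ee
where $R(\bar g)$ is the scalar curvature of the slice and the bracketed terms are smooth and bounded uniformly on the compact set $I\times S^n$ (they depend only on $g(t)$ and its first two $t$-derivatives, all controlled). Since $\bar g(t) = \omega(t)^2 g(t)$ with $\omega$ close to $1$ and $\veps\in[0,1]$, the slice scalar curvature $R(\bar g(t)) = \omega(t)^{-2} R(g(t))$ is bounded below by a positive constant $c_0 := \min_{t\in I, x\in S^n} \tfrac{1}{2} R(g(t))(x) > 0$ uniformly in $\veps\in[0,1]$ (using $\omega(t)^2 = 1+\veps t^2 \le 2$). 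Then $R(\gamma_\veps) \ge c_0 - C A^{-2}$ for a constant $C$ independent of $\veps\in[0,1]$ and of the point, so choosing $A_0 = A_0(C,c_0)$ with $A_0^2 > C/c_0$ yields $R(\gamma_\veps) > 0$ for all $A\ge A_0$ and all $\veps\in[0,1]$.

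\textbf{The main obstacle} is bookkeeping: one must verify that the error terms in the scalar curvature expansion are genuinely uniform in $\veps\in[0,1]$ and in $t\in I$, and that no term secretly carries a positive power of $A$ — which it does not, since $A$ enters only through the $dt^2$ coefficient and every $t$-derivative of the metric produces exactly one factor of $A^{-1}$ in the associated Christoffel/curvature quantity. I would carry out the expansion cleanly by writing $\gamma_\veps = A^2 dt^2 + \bar g(t)$, substituting into the Gauss equation $R(\gamma_\veps) = R(\bar g) - |\mathrm{II}|^2_{\bar g} - H^2 - 2\,\mathrm{Ric}(\gamma_\veps)(\nu,\nu)$ together with the Riccati (radial focusing) identity $\mathrm{Ric}(\gamma_\veps)(\nu,\nu) = -\tr_{\bar g}(\partial_\nu \mathrm{II}) - |\mathrm{II}|^2_{\bar g}$, and noting $\mathrm{II} = O(A^{-1})$, $\partial_\nu \mathrm{II} = O(A^{-2})$, $H = O(A^{-1})$ with all implied constants uniform over the compact parameter range $[0,1]\times I$. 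This reduces the estimate to $R(\gamma_\veps) = R(\bar g) + O(A^{-2})$, and the proof concludes as above.
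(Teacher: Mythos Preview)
Your approach is essentially the same as the paper's: compute the mean curvature of the slices using $\tr_{g}\dot g = 0$ (equivalently, condition (iv) of Lemma~\ref{nice-path}) to get $H_t = \dfrac{n\veps t}{A(1+\veps t^2)}$, and bound the scalar curvature by $R(\bar g) + O(A^{-2})$ with the $O$-constant uniform in $(\veps,t)\in[0,1]\times I$, then choose $A_0$ large. One caveat: in your first displayed scalar-curvature formula the signs on the $\tfrac14(\tr\,\cdot)^2$ and $\tfrac34|\cdot|^2$ terms are swapped relative to the correct identity $R(\gamma)=R(h)+A^{-2}\big[-\tr_h\ddot h-\tfrac14(\tr_h\dot h)^2+\tfrac34|\dot h|_h^2\big]$; your subsequent Gauss--Riccati derivation is correct and yields the right expression, so this is a typo rather than a gap.
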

\begin{proof}
Consider a metric of the form
\be
\gamma=A^2dt^2+h(t),
\ee
where 
$h(t)=(1+\veps t^2)g(t)$, $A>0$ and $\veps>0$, to be determined later (here we are abusing notation by using $h(t)$ again).
Direct calculations give
\be
\begin{split}
R(\gamma)&=R(h) +A^{-2}\left[-\tr_h\ddot{h}-\frac{1}{4}(\tr_h \dot{h})^2+\frac{3}{4}|\dot{h}|^2_h\right] \\
&\geq \inf\limits_{t,x}R(h)+A^{ -2}\left[-\dfrac{2n\veps}{(1+\veps t^2)^{ -1}} -\sup\limits_{t,x}|\tr_g\ddot{g}| -\dfrac{n^2 \veps^2 t^2}{(1+\veps t^2)^2} \right]. \end{split}
\ee
Hence, by picking $A_0 \gg 1$ sufficiently large and $A\geq A_0$
, the metric
\be
\gamma_{\veps}=A^2dt^2+(1+\veps t^2)g(t)
\ee
has positive scalar curvature for all $\veps\in[0,1]$. Note that the mean curvature of any slice $ \{ t \} \times S^{n}$, is given by
\be
H_t=\dfrac{n\veps t}{A(1+\veps t^2)} .
\ee 
Therefore, $H= 0 $  when  $t=0$ and $ H > 0 $ when $ t > 0$. 
\end{proof}

\begin{remark}
Since we have the stronger condition $R(g(t))> 0 $, we do not need
to use the first positive eigenfunction of the operator $-\Delta_g + \frac12 R(g)$
as a warping factor in \eqref{collar-metric} as opposed to that being used in
\cite{schoen-mant}.

\end{remark}

\subsection{Bending the Schwarzschild metric}

The $(n+1)$-dimensional spatial Schwarzschild manifold (outside its horizon) is given by 
\bee
( M^{n+1}, g_m ) = \left( [r_0, \infty) \times S^n,  \frac{1}{1 - \frac{2m}{r^{n-1} } } d r^2 + r^2 g_* \right) ,
\eee
where $g_*$ denotes  the standard metric on $S^{n}$ with constant sectional curvature $1$ and $r_0 = \left( 2 m \right)^\frac{1}{n-1} $.
Replacing $ r$ by $s$, which is the distance function to the horizon $ \{ r_0 \} \times S^n $, 
 one can re-write $g_m$ as 
\be
g_m=ds^2+u_m(s)^2g_*,
\ee
defined on $[0,\infty)\times S^{n}$. Here the horizon $\{ r  = r_0 \}$ corresponds to $s=0$. The function $u_m (s) $ satisfies
\begin{enumerate}[(a)]
\item $u_m(0)=(2m)^{\frac{1}{n-1}}$,
\item $u_m'(0)=0$,
\item $u_m'(s)=\left(1-\dfrac{2m}{u_m(s)^{n-1}}\right)^{1/2}$ for $s>0$, and
\item $u_m''(s)=(n-1)\dfrac{m}{u_m^{n}}$ for $s>0$.
\end{enumerate}
In particular, when $n=3$,
\be
u_m(0)=\sqrt{2m}, \ \  u_m'(0)=0, 
\ee
and 
\be
\, u_m'(s)=\left(1-\dfrac{2m}{u_m(s)^2}\right)^{1/2}, \ \  u_m''(s)=\dfrac{2m}{u_m^3(s)},\quad\textmd{ for $s>0$}.
\ee

The next Lemma ``bends"  the metric $g_m$  near the horizon $\{ s = 0 \}$ so that the resulting metric has 
strictly positive scalar curvature near $\{ s = 0 \}$.

\begin{lemma}  \label{lma-bending}
Let $s_0>0$. There exist a small $\delta>0$ and a smooth function $\sigma:[s_0-\delta,\infty)\to (0,\infty)$ satisfying
\begin{enumerate}
\item $\sigma(s)=s$ for all $s\geq s_0$,
\item $\sigma$ is monotonically increasing, and
\item the metric $ds^2+u_m(\sigma(s))^2g_*$ has positive scalar curvature for $s_0-\delta\leq s< s_0$ and vanishing scalar curvature for $s\geq s_0$.
\end{enumerate}
\end{lemma}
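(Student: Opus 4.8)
The plan is to construct $\sigma$ by prescribing its derivative. Set $\sigma'(s) = f(s)$ for a suitable smooth nonnegative function $f$ supported issue aside; more precisely I would look for $\sigma$ of the form $\sigma(s) = s$ for $s \ge s_0$, $\sigma$ strictly increasing, with $\sigma'(s) \le 1$ on $[s_0 - \delta, s_0]$ and $\sigma'(s_0) = 1$, matching all derivatives at $s_0$. The scalar curvature of a warped product $ds^2 + w(s)^2 g_*$ on $I \times S^n$ with $g_*$ the round unit metric is
\be
R = \frac{1}{w^2}\left[ n(n-1) - n(n-1)(w')^2 - 2n w w'' \right] = \frac{n(n-1)}{w^2}\left(1 - (w')^2\right) - \frac{2n w''}{w}.
\ee
Here $w(s) = u_m(\sigma(s))$, so $w' = u_m'(\sigma)\sigma'$ and $w'' = u_m''(\sigma)(\sigma')^2 + u_m'(\sigma)\sigma''$. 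Using properties (c) and (d) of $u_m$, namely $(u_m')^2 = 1 - 2m/u_m^{n-1}$ and $u_m'' = (n-1)m/u_m^n$, one computes that when $\sigma' \equiv 1$ (i.e. $s \ge s_0$) the bracket vanishes identically, recovering $R = 0$ as expected since $g_m$ is scalar flat. The first step, then, is to write $R$ as a function of $\sigma, \sigma', \sigma''$ and simplify using these identities to isolate the terms that change when $\sigma' < 1$.

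\textbf{Key computation.} Carrying this out, I expect the expression to take the schematic form
\be
R\big(ds^2 + u_m(\sigma)^2 g_*\big) = \frac{1}{u_m(\sigma)^2}\Big[ A(\sigma)\big(1 - (\sigma')^2\big) - B(\sigma)\, \sigma'' \Big],
\ee
where $A(\sigma) > 0$ is built from $n(n-1)(u_m')^2 \ge 0$ plus the contribution of $2n u_m u_m''/u_m = 2n(n-1)m/u_m^{n-1} > 0$, and $B(\sigma) = 2n u_m(\sigma) u_m'(\sigma)$. The crucial point is that $A(\sigma) > 0$ strictly as long as $\sigma > 0$ (so $u_m > 0$ and $m/u_m^{n-1} > 0$), even though $u_m'$ could a priori vanish — but $u_m'(s) > 0$ for $s > 0$, and we will keep $\sigma(s)$ bounded away from $0$ on the compact interval $[s_0 - \delta, s_0]$ by taking $\delta$ small. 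Thus the sign of $R$ is governed by $A(\sigma)(1 - (\sigma')^2) - B(\sigma)\sigma''$: if I arrange $\sigma' < 1$ (strict) on $[s_0 - \delta, s_0)$ and $\sigma'' \le 0$ (i.e. $\sigma$ concave there, decelerating up to slope $1$ at $s_0$), then both terms are nonnegative and the first is strictly positive, giving $R > 0$. One caveat: near $s = s_0$, $B(\sigma)$ is positive and $\sigma''$ must be negative, which is consistent, but I must also ensure a smooth match at $s_0$, so $\sigma''(s_0) = 0$ and all higher derivatives of $\sigma$ vanish at $s_0$ from the left; a concave profile with $\sigma'(s_0) = 1$, $\sigma''(s_0^-) = 0$ works if built carefully.

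\textbf{Construction of $\sigma$.} Concretely, I would pick a smooth function $\beta: \R \to [0,1]$ with $\beta(s) = 0$ for $s \le -1$, $\beta(s) = 1$ for $s \ge 0$, and $\beta$ monotone; set $\sigma'(s) = \beta\big((s - s_0)/\delta\big)$ so that $\sigma'(s) = 1$ for $s \ge s_0$, $0 \le \sigma' < 1$ for $s < s_0$ (shrinking $\delta$ so $\sigma'$ stays positive, say $\sigma' \ge 1/2$, to keep $\sigma$ increasing and to control $\sigma(s_0) - \sigma(s)$), and then $\sigma(s) = s - \int_s^{s_0} \sigma'(\tau)\,d\tau$ for $s < s_0$, which is smooth and strictly increasing. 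This does not literally make $\sigma'' \le 0$, so instead I would control the sign differently: on the small interval the term $A(\sigma)(1 - (\sigma')^2)$ is bounded below by a positive constant times $(1 - (\sigma')^2)$, while $|B(\sigma)\sigma''| \le C|\sigma''|$; choosing $\beta$ so that $1 - (\sigma')^2$ dominates $|\sigma''|$ — e.g. taking $\sigma'$ to approach $1$ slowly, like $1 - \sigma'(s) \sim \exp(-1/(s_0 - s))$ type behavior is too flat; better, parametrize the transition so $\beta' $ is small compared to $1 - \beta$ — forces $R > 0$. The main obstacle I anticipate is exactly this quantitative balancing near $s_0$: reconciling $\sigma''$ small, $1 - (\sigma')^2$ comparatively large, and still achieving a $C^\infty$ match with the identity function at $s_0$. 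The cleanest route is probably to first solve for a profile on a \emph{fixed} interval where the differential inequality $A(1-(\sigma')^2) - B\sigma'' > 0$ holds with room to spare, then rescale/translate; I would set this up as choosing $\sigma$ concave on $(s_0 - \delta, s_0 - \delta/2)$ to build up a deficit $1 - \sigma' > 0$, keep $\sigma'$ constant $< 1$ on a middle portion (where $\sigma'' = 0$ and the $A$-term alone gives $R > 0$), and then on $(s_0 - \delta/3, s_0)$ let $\sigma$ be convex to bring $\sigma'$ back up to $1$ with vanishing derivatives at $s_0$ — on this last piece $\sigma'' > 0$ hurts us, so it must be made small relative to $1 - (\sigma')^2$ there, which is possible since we have the whole sub-interval to spread out the increase and $1 - (\sigma')^2$ is still bounded below until the very end, where a Taylor-type estimate ($1 - \sigma' \gtrsim$ distance to $s_0$, $\sigma'' \lesssim$ that too if we use a profile vanishing to infinite order) closes the argument. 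I would then shrink $\delta$ as needed and verify $R \ge 0$ everywhere with $R > 0$ on $[s_0 - \delta, s_0)$.
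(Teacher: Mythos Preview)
Your reduction to the inequality
\[
(n-1)\bigl(1-(\sigma')^2\bigr) \;-\; 2\,u_m(\sigma)\,u_m'(\sigma)\,\sigma'' \;>\; 0
\]
is correct (in fact the coefficient you call $A$ simplifies to the constant $n(n-1)$, once you use that Schwarzschild is scalar flat). The genuine gap is the sign choice: you insist on $\sigma'<1$ on $[s_0-\delta,s_0)$, and this makes the endpoint match at $s_0$ \emph{impossible}, not just delicate. Write $\psi=1-\sigma'>0$, so $\sigma''=-\psi'$. Near $s_0$ the inequality becomes, after discarding the harmless $-\psi^2$ term and freezing $u_m u_m'\approx c>0$,
\[
2(n-1)\,\psi \;+\; 2c\,\psi' \;>\; 0,
\qquad\text{i.e.}\qquad
(\log\psi)' \;>\; -\tfrac{n-1}{c}.
\]
Integrating from $s$ to $t$ and letting $t\to s_0^-$ gives $\log\psi(t)\to-\infty$ while the right side stays bounded: a contradiction. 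Equivalently, since $\sigma'$ must rise to $1$ at $s_0$ we eventually have $\sigma''\ge0$, and then the required bound $\psi\gtrsim|\psi'|$ forces $\psi$ to be bounded below, so it cannot tend to $0$. Your ``Taylor-type estimate $1-\sigma'\gtrsim s_0-s$'' is already incompatible with even a $C^2$ match, since it would force $\sigma''(s_0^-)\ge c>0$.

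The paper goes the other way: it takes $\sigma'=1+e^{-1/(s-s_0)^2}>1$ on $[s_0-\delta,s_0)$. Now the $(n-1)(1-(\sigma')^2)$ term is \emph{negative}, but $\sigma''<0$ there, so $-2u_m u_m'\sigma''>0$; setting $\psi=\sigma'-1$, the needed inequality becomes $|\psi'|\gtrsim\psi$, which is exactly the \emph{reverse} of what blocked you and is easily satisfied by any flat bump function (here $|\psi'|/\psi=2|s-s_0|^{-3}\to\infty$). The fix to your proposal is therefore a one-line change of sign in the profile of $\sigma'$; all the subsequent balancing issues you worry about then disappear.
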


\begin{proof}
Recall that for a metric 
$
\tilde{g}=ds^2+f(s)^2g_*,
$
its scalar curvature is given by
\be
\tilde{R}=nf^{-2}\left[ (n-1) - (n-1)\dot{f}^2 -2f\ddot{f} \right].
\ee
Hence for $ds^2+u_m(\sigma(s))^2g_*$, we need to have 
\be
\tilde{R}=nu_m^{-2}\left[ (n-1) - (n-1)\left(\dfrac{d}{ds}u_m\right)^2 -2u_m\dfrac{d^2}{ds^2}u_m \right]>0,
\ee
on $[s_0-\delta,s_0)$. Thus, it  is sufficient  to require 
\be
(n-1) - (n-1)\left(\dfrac{d}{ds}u_m(\sigma)\right)^2 -2u_m\dfrac{d^2}{ds^2}u_m((\sigma))>0,
\ee
on $[s_0-\delta,s_0)$.
By  the fact that Schwarzschild is scalar flat,  this  is equivalent to 
\be\label{scal-flat}
(n-1)-(n-1)(\sigma')^2-2u_m(\sigma)u_m'(\sigma)\sigma''>0,
\ee
on $[s_0-\delta,s_0)$.
Now  define $\theta(s)=1+e^{-\frac{1}{(s-s_0)^2}}$ and $\theta(s_0)=1$. 
For  sufficiently small $\delta$,  let
\be
\sigma(s)=\dint_{s_0-\delta}^{s} \theta(s)\,ds+K_{\delta},
\ee
where $K_{\delta}$ is chosen so that $\sigma(s_0)=s_0$ and thus can be extended to be equal to $s$ for $s \geq s_0$. 
With such a choice of $\sigma(s)$, (\ref{scal-flat}) becomes
\be
\begin{split}
& \ (n-1)-(n-1)[1+2e^{-\frac{1}{(s-s_0)^2}}+e^{-\frac{2}{(s-s_0)^2}}]-4u_m(\sigma)u_m'(\sigma)\dfrac{e^{-\frac{1}{(s-s_0)^2}}}{(s-s_0)^3} \\
= & \ e^{-\frac{1}{(s-s_0)^2}}\left(-2(n-1)-(n-1)e^{-\frac{1}{(s-s_0)^2}}-4u_m(\sigma)u_m'(\sigma)\dfrac{1}{(s-s_0)^3}\right).
\end{split}
\ee
By taking $\delta$ sufficiently small, this last quantity is positive.
\end{proof}

\subsection{Gluing lemma}

The following lemma allows one to glue a  collar extension $(I \times S^n, \gamma_\epsilon)$ from Lemma \ref{collar} to the 
``bending" of the Schwarzschild metric in Lemma \ref{lma-bending}.

\begin{lemma}\label{pasting}
Let $ g_*$ be the standard metric  (of constant sectional curvature $1$) on $S^{n}$.
Let $f_i:[a_i,b_i]\to \R$, $ i =1, 2$,  be  two 
smooth
functions satisfying 
\begin{enumerate}[(I)]
\item $f_i>0$, $f_i'>0$ and $f_i''>0$ on $[a_i,b_i]$,
\item the metric $dt^2+f_i(t)^2g_{*}$ on $[a_i,b_i]\times S^{n}$  has positive scalar curvature,
\item $f_1(b_1)<f_2(a_2)$ and $f_1'(b_1)=f_2'(a_2)$.
\end{enumerate}
Then, after translating the intervals so that $a_2-b_1=(f_2(a_2)-f_1(b_1))/f_1'(b_1)$, there exists a smooth  function $f:[a_1,b_2]\to\R$ satisfying
\begin{enumerate}[(i)]
\item $f>0$ and $f'>0$ on $[a_1,b_2]$,
\item $f=f_1$ on $[a_1,\frac{a_1+b_1}{2}]$,
\item $f=f_2$ on $[\frac{a_2+b_2}{2},b_2]$, and
\item the metric $dt^2+f(t)^2g_*$  on $[a_1,b_2]\times S^{n}$ has positive scalar curvature.
\end{enumerate}
\end{lemma}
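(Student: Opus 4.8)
\emph{Proof idea.} The plan is to join $f_1$ to $f_2$ by an explicit straight bridge and then round off the two resulting corners by a mollification that is small in $C^1$ and controlled from above in the second derivative.

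The key preliminary observation is that hypotheses (I) and (II) force $0<f_i'<1$ on $[a_i,b_i]$: writing the scalar curvature of $dt^2+f_i^2 g_*$ as $\tilde R = n f_i^{-2}\big[(n-1)(1-(f_i')^2)-2f_i f_i''\big]$, positivity of $\tilde R$ together with $f_i>0$ and $f_i''>0$ gives $(n-1)(1-(f_i')^2)>2f_i f_i''>0$, hence $(f_i')^2<1$; with (I) this yields $f_i'\in(0,1)$. In particular the common slope $c:=f_1'(b_1)=f_2'(a_2)$ lies in $(0,1)$, and since $f_2(a_2)>f_1(b_1)$ the prescribed translation is possible and produces $a_2-b_1>0$. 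I then define a $C^1$ function $F$ on $[a_1,b_2]$ by $F=f_1$ on $[a_1,b_1]$, $F(t)=f_1(b_1)+c(t-b_1)$ on $[b_1,a_2]$, and $F=f_2$ on $[a_2,b_2]$; the translation condition is precisely what makes the straight middle piece reach the value $f_2(a_2)$ at $t=a_2$, so $F$ is genuinely $C^1$ and is smooth away from $b_1$ and $a_2$, where $F''$ has a bounded jump. On the straight piece $F''=0$ and $1-(F')^2=1-c^2>0$, so $dt^2+F^2 g_*$ has positive scalar curvature there, and it does on $[a_1,b_1]$ and $[a_2,b_2]$ by (II).

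It remains to smooth $F$ near the corners. Here I use that positivity of the scalar curvature of $dt^2+f^2g_*$ is equivalent to the pointwise inequality $f''<\Psi(f,f'):=\frac{(n-1)(1-(f')^2)}{2f}$, and that at $t=b_1$ both one-sided values of $F''$ — namely $f_1''(b_1)$ from the left and $0$ from the right — are strictly below $\Psi(F(b_1),F'(b_1))$, the left inequality being exactly (II) for $f_1$ at $b_1$; similarly at $a_2$. Hence I can choose $\eta>0$ so small that $J_1:=[b_1-\eta,b_1+\eta]$ and $J_2:=[a_2-\eta,a_2+\eta]$ are disjoint, lie in $(a_1,b_2)$, are disjoint from $[a_1,\frac{a_1+b_1}{2}]$ and $[\frac{a_2+b_2}{2},b_2]$, and satisfy $\sup_{J_k}F''<\inf_{J_k}\Psi(F,F')$ with $F'\in(0,1)$ and $F>0$ on each $J_k$. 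On a neighborhood of each corner I replace $F$ by its mollification $F*\phi_\mu$ and patch it back to $F$ across the outer part of $J_k$ using a fixed smooth cutoff — legitimate because $F$ is already smooth there, so $F*\phi_\mu\to F$ in $C^2$ away from the corner. The resulting function $f$ is smooth, equals $f_1$ on $[a_1,\frac{a_1+b_1}{2}]$ and $f_2$ on $[\frac{a_2+b_2}{2},b_2]$ — which is (ii) and (iii) — and satisfies $\|f-F\|_{C^1}\to 0$ together with $f''\le\sup_{J_k}F''+o(1)$ on $J_k$ as $\mu\to0$, while away from the corners $f$ is $C^2$-close to $F$, which already satisfies the strict curvature inequality there. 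Choosing $\mu$ small, $f>0$, $f'>0$, and $f''<\Psi(f,f')$ everywhere on $[a_1,b_2]$, giving (i) and (iv).

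The one delicate point is this last smoothing, which must simultaneously leave $f_1$ and $f_2$ untouched on the intervals required by (ii)–(iii), produce a genuinely $C^\infty$ function, and preserve the strict curvature inequality. What makes the curvature control cheap is the a priori bound $f_i'<1$: it keeps $\Psi(F,F')$ bounded away from $0$ on the gluing regions, so that a $C^1$-small perturbation with a one-sided $C^0$-bound on the second derivative cannot violate $f''<\Psi(f,f')$.
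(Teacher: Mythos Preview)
Your proof is correct and follows essentially the same route as the paper: build the $C^{1,1}$ function by inserting the straight segment of slope $c=f_1'(b_1)=f_2'(a_2)$ between $f_1$ and $f_2$, rewrite the positive scalar curvature condition as the pointwise inequality $f''<\frac{n-1}{2f}(1-(f')^2)$, observe that this strict inequality holds for the piecewise function (with margin) everywhere including on the linear bridge, and then mollify to remove the two corners while preserving the inequality. The only cosmetic difference is the smoothing device: the paper uses a single variable-radius mollification $f_\nu(t)=\int \tilde f(t-\nu\eta_\delta(t)s)\phi(s)\,ds$ with a cutoff $\eta_\delta$ built into the kernel, whereas you mollify locally and blend back with a cutoff; both give the needed $C^1$-closeness together with $f_\nu''\le\sup F''+o(1)$, so the curvature inequality survives. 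Your explicit remark that hypotheses (I)--(II) force $0<f_i'<1$, hence $c\in(0,1)$ and $\Psi>0$ on the bridge, makes transparent a point the paper leaves implicit.
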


\begin{proof}
Define $\tilde f $ on $[a_1, b_2]$ so that $\tilde f $ agrees with $f_1$ and $f_2$ on $[a_1, b_1]$ and $[a_2, b_2]$, respectively, 
and whose graph on $[b_1, a_2]$ is the line segment connecting $(b_1, f_1(b_1))$ and $(a_2, f_2 (a_2) )$.
Clearly,  $\tilde{f}\in C^{1,1}([a_1,b_2])$ and $\tilde{f}$ is smooth  except at $b_1$ and $a_2$. 

Define  $m_i=(a_i+b_i)/2$, $ i =1, 2$.  Let $\delta>0$ be such that $m_1<b_1-\delta$ and $a_2+\delta<m_2$. Let $\eta_{\delta}$ be a smooth cut-off function such that $\eta_{\delta}(t)=1$ on $[b_1-\delta,a_2+\delta]$ and $\eta_{\delta}(t)=0$ on $[a_1,m_1]\cup[m_2,b_2]$. Define the following mollification of $\tilde{f}$:
\be
f_{\nu}(t)=\int_{\R}\tilde{f}(t-\nu\eta_{\delta}(t)s)\phi(s)\,ds.
\ee
This mollification fixes $\tilde{f}$ on $[a_1,m_1]\cup[m_2,b_2]$ and coincides with the standard mollification on an interval properly containing $[b_1,a_2]$; on the remaining part, its value is given by a standard mollification of $f$ with radius $\nu\eta_{\delta}(x)\leq\nu$. It can be easily checked that both $f_{\nu}\to f$ and $f_{\nu}'\to f'$ in $C^0([a_1,b_2])$, as $\nu\to 0$.

A direct calculation shows  that for $f>0$, the metric $\tilde{g}=dt^2+f(t)^2g_*$ has positive scalar curvature if and only if 
\be
f''(t)<\dfrac{(n-1)}{2f(t)}\left( 1-f'(t)^2\right).
\ee
Thus, by assumption $(II)$,
\be
f_i''(t)<\dfrac{(n-1)}{2f_i(t)}\left( 1-f_i'(t)^2\right),\,\textmd{ on $[a_i,b_i]$.}
\ee
The condition $f_i''>0$ on $[a_i,b_i]$ ensures that the graph of
\be
\Omega[\tilde{f}](x)=\dfrac{(n-1)}{2\tilde{f}(t)}\left( 1-\tilde{f}'(t)^2\right)
\ee
lies strictly above the graph of $\tilde{f}''$ (when defined) and the graphs of $f_1''$ and $f_2''$. 
Clearly, $\Omega[f_{\nu}]\to \Omega[\tilde{f}]$ in $C^0([a_1,b_2])$ as $\nu\to 0$. 
Let $3d$ be the smallest vertical distance from the graph of $\Omega[{\tilde{f}}]$ 
to the graphs of $f_1''$ and $f_2''$. 
The uniform convergence imply that we can take a small $\nu$ so that the graph of $\Omega[f_{\nu}]$ lies 
exactly within a distance $d$ from the graph of $\Omega[\tilde{f}]$. 
Since $\Omega[\tilde{f}]$ is uniformly continuous, 
there exists a number $\nu>0$ such that $\Omega[\tilde{f}](s)\leq \Omega[\tilde{f}](t)+d$ 
on $[t-\nu,t+\nu]$. For simplicity, abusing notation, set $\tilde{f}''(b_1)=f''_1(b_1)$ 
and $\tilde{f}''(a_2)=f_2''(a_2)$. Then it follows that for a sufficiently small $\nu$,
\be
f_{\nu}''(t)\leq \sup\limits_{[t-\nu,t+\nu]} \tilde{f}''(s)+d\leq \sup\limits_{[t-\nu,t+\nu]} \Omega[\tilde{f}](s)-3d+d\leq \Omega[\tilde{f}](t)-d,
\ee
and hence $f_{\nu}''(t)< \Omega[f_{\nu}](t)$ on $[a_1,b_2]$. It follows that the metric $dt^2+f_{\nu}(t)^2g_*$ has positive scalar curvature.
\end{proof}

\section{Proofs of Theorem \ref{main} and Theorem \ref{main-2}}  \label{sec-proof}
With the  paths of metrics $\{ h(t) \}_{0\le t \le 1}$  in $ \Scal^+(S^3)$ and  $ \Scal^+_* (S^n)$ given in Sections \ref{ricci-path} and \ref{star-shaped},  respectively,  
one can prove Theorem \ref{main} and \ref{main-2} in the same way that Theorem 2.1 was proved in \cite{schoen-mant}.

\begin{proof}[Proof of Theorem \ref{main}]
Let $g\in \Scal^+(S^3)$ and let $\{ h(t) \}_{0 \le t \le 1} $ be given by Corollary \ref{coro-smooth}.  
Normalize this path so that it is volume preserving by considering 
\be
\tilde{h}(t)=\psi(t)h(t),\,\textmd{with}\,\, 
\psi(t)=\left( \frac{\vol(g)}{\vol(h(t))}  \right)^{\frac{2}{3}}.
\ee
Apply Lemma \ref{nice-path} to $\{ \tilde{h}(t) \}_{0 \le t \le 1}$ to obtain $\{ g(t) \}_{0 \le t \le 1}$. 
Let $m>0$ be a constant such that $\omega_3 (2 m)^{3/2}>\vol(g)$. 

In what follows, we set $ n = 3 $, though  we will keep using the notation $n$ to emphasize that this part of the proof holds in general dimensions.
Consider the family of collar extensions obtained in Lemma \ref{collar}, i.e., the metrics
\be
\gamma_{\veps}=A^2dt^2+(1+\veps t^2)g(t)
\ee
with positive scalar curvature on $[0,1]\times S^n$ for 
 $\veps\in[0,1]$. Let $g_*$ denote the round metric on $S^{n}$. Then $g(t)=\rho^2 g_*$ for some $\rho>0$
on $[1/2,1]$ (recall that $g(1)$ is round and $g(t) = g(1) $ for $t \in  [1/2,1]$). 
Make the change of variables 
$s=At$ on $[1/2,1]$, obtaining
\be
\gamma_{\veps}=ds^2+(1+\veps A^{ -2}s^{2})\rho^2 g_*,
\ee
for $s\in [A/2,A]$.

Define $f_{\veps}(s)=(1+\veps A^{ -2}s^{2})^{1/2}\rho$. Then,
\begin{align}
f_{\veps}'(s)&=\dfrac{\rho \veps s}{A^2(1+\veps A^{ -2}s^2)^{1/2}}>0, \\
f_{\veps}''(s)&=\dfrac{\rho \veps}{A^2(1+\veps A^{ -2}s)^{3/2}}>0.
\end{align}

This function will play the role of $f_1$ in Lemma \ref{pasting}. The role of $f_2$ will be played by the function $u_m(\sigma(s))$ in the Schwarzschild bending $ds^2+u_m(\sigma(s))^2g_*$ from Lemma \ref{lma-bending}. To be able to apply Lemma \ref{pasting} we need 
$f_{\veps}(A)<u_m(\sigma(s_0-\delta))$ and 
$f_{\veps}'(A)=u_m'(\sigma(s_0-\delta))$; to achieve this condition we will choose $\veps$ and $\delta$ accordingly. Consider the curves 
$\Gamma(\veps)=(f_{\veps}(A),f_{\veps}'(A))$ and $\Lap(s)=(u_m(s),u_m'(s))$.

Notice that as $\veps\to 0$ 
\be
\Gamma(\veps)\to\left(\rho,0\right)=\left(\left( \dfrac{\vol(g)}{\omega_{n}} \right)^{\frac{1}{n}},0\right).
\ee
Moreover, the slope 
$f_{\veps}'(A)/f_{\veps}(A)$ is strictly decreasing.

When $s\to 0$, $\Lap(s)\to ((2m)^{1/(n-1)},0)$. Since $m$ is chosen so that 
\be
m > \dfrac{1}{2}\left(\dfrac{\vol(g)}{\omega_{n}}\right)^{(n-1)/n},
\ee
$\Lap(0)$ lies to the right of $\Gamma(0)$. Using continuity, pick $s_0$ so the segment of the curve  $\Lap(s)$ from $0$ to $s_0$ lies strictly to the right and below the curve $\Gamma(\veps)$. Apply Lemma \ref{lma-bending} with $\delta$ sufficiently small so that the curve $u_m(\sigma(s)):[s_0-\delta,s_0]\to \mathbb{R}$ has positive second derivative and the curve $\tilde{\Lap}(s)=(u_m(\sigma(s)),u_m'(\sigma(s)))$ still lies to the right and below $\Gamma(\veps)$. Now pick 
$\veps<1$ so that $\Gamma(\veps)=\tilde{\Lap}(s_0-\delta)$, and apply Lemma \ref{pasting} to construct a positive scalar bridge between the collar extensions and the bending of Schwarzschild. The result follows.
\end{proof}

\begin{proof}[Proof of Theorem \ref{main-2}]
Given $ g \in \Scal^+_*(S^n)$,  let $\{ h(t) \}_{0 \le t \le 1}$ be the path of metrics on $S^n$ constructed in Section \ref{star-shaped}. As above,  
to apply Lemma \ref{nice-path}
we normalize this path so that it is volume preserving by setting
\be
\tilde{h}(t)=\psi(t)h(t),\,\textmd{with}\,\, 
\psi(t)=\left( \frac{\vol(g)}{\vol(h(t))}  \right)^{\frac{2}{n}}.
\ee
Apply Lemma \ref{nice-path} to  this new path $\{ \tilde h (t) \}_{ 0 \le t \le 1 }$  to obtain  $ \{ g(t) \}_{0 \le t \le 1}$.
Let $m>0$ such that $\omega_n(2m)^{n/(n-1)}>\Vol(g)$. 
The  rest of the proof now is the same as 
that of Theorem {\ref{main}} above.
\end{proof}

We finish this paper by pointing out 
a more elementary case in which the conclusion of Theorem \ref{main-2} also holds. 
If  $g$ is a metric of positive scalar curvature on $S^n$ ($n\ge 3)$ 
that is conformal to the standard metric,  say $ g = u^{\frac{4}{n-2}}g_*$ for some  smooth positive function $u$,
then it is straightforward to check that the metric $h(t)=\left[ (1-t)u+t \right]^{\frac{4}{n-2}} g_*$, $t\in[0,1]$, has positive scalar curvature for each $t$.
Hence, by applying the proof of Theorem \ref{main-2} to this path $\{ h(t)\}_{0\le t\le 1}$, one knows that Theorem \ref{main-2} holds for 
such metrics  in the standard  conformal class on $S^n$.

\end{document}